\documentclass[11pt,twoside,reqno]{amsart}

\usepackage{version}         % include, exclude and comment
%\includeversion{version:...}
%\excludeversion{version:...}

% \usepackage[right]{showlabels}
 \usepackage{color}

\usepackage{multirow}
\usepackage[OT1]{fontenc}
\usepackage{type1cm}
\usepackage{amssymb}
\usepackage{mathrsfs}

\usepackage[left=2.3cm,top=3cm,right=2.3cm]{geometry}
\geometry{a4paper,centering}
%\geometry{letter,centering}

%\renewcommand{\captionlabeldelim}{.}
%\renewcommand{\thefigure}{\Alph{figure}}
%\setcaptionwidth{.9\textwidth}

\numberwithin{equation}{section}

\theoremstyle{plain}
\newtheorem{theorem}{Theorem}[section]

\newtheorem{corollary}[theorem]{Corollary}
\newtheorem{proposition}[theorem]{Proposition}
\newtheorem{lemma}[theorem]{Lemma}
\newtheorem{notation}[theorem]{Notation}

\theoremstyle{remark}
\newtheorem{remark}[theorem]{Remark}

\newtheorem{example}[theorem]{Example}
\newtheorem*{ack}{Acknowledgement}

\theoremstyle{definition}
\newtheorem{definition}[theorem]{Definition}

\newcommand{\R}{\mathbb{R}}

\newcommand{\N}{\mathbb{N}}

 % Contractive matrices of GL_d(\R). How to denote?

%%%%%%%%%%%%%%%%%%%%%%%%%%%%%%%%%%%%%%%%%%%%%%%%%%%%%%%%%%%%%%%%%%%%%
\begin{document}

\title[Distributions of full and non-full words in beta-expansions]{Distributions of full and non-full words in beta-expansions}

\author{Yao-Qiang Li}
\address{Department of Mathematics \\
         South China University of Technology \\
         Guangzhou, 510641 \\
         P.R.\ China}
\email{scutyaoqiangli@qq.com}

\author{Bing Li$^*$}
\address{Department of Mathematics \\
         South China University of Technology \\
         Guangzhou, 510641 \\
         P.R.\ China}
\email{scbingli@scut.edu.cn}

%\thanks{???????????????????}
\thanks{*Corresponding author.}
\subjclass[2000]{Primary 11K99; Secondary 37B10.}
\keywords{$\beta$-expansions, full word, full cylinder, non-full word, distribution}
\date{\today}

\begin{abstract}
The structures of full words and non-full for $\beta$-expansions are completely characterized in this paper. We obtain the precise lengths of all the maximal runs of full and non-full words among admissible words with same order.
\end{abstract}

\maketitle

\section{Introduction}

Let $\beta>1$ be a real number. The $\beta$-expansion was introduced by R\'enyi \cite{Ren57} in 1957, which generalized the usual decimal expansions (generally $N$-adic expansion with integers $N>1$) to that with any real base $\beta$. There are some different behaviors for the representations of real numbers and corresponding dynamics for the integer and noninteger cases. For example, when $\beta\in\N$, every element in $\{0,1,\cdots,\beta-1\}^{\N}$ (except countablely many ones) is the $\beta$-expansion of some $x\in[0,1)$ (called admissible sequence). However, if $\beta\notin\N$, not any sequence in $\{0,1,\cdots,\lfloor\beta\rfloor\}^{\N}$ is the $\beta$-expansion of some $x\in[0,1)$ where $\lfloor\beta\rfloor$ denotes the integer part of $\beta$. Parry \cite{Pa60} managed to provide a criterion for admissability of sequences (see Lemma \ref{charADM} below). Any finite truncation of an admissible sequence is called an admissible word. Denoted by $\Sigma_\beta^n$ the set of all admissible words with length $n\in\N$. By estimating the cardinality of $\Sigma_\beta^n$ in \cite{Ren57}, it is known that the topological entropy of $\beta$-transformation $T_\beta$ is $\log\beta$. The projection of any word in $\Sigma_\beta^n$ is a cylinder of order $n$ (also say a fundamental interval), which is a left-closed and right-open interval in $[0,1)$. The lengths of cylinders are irregular for $\beta\notin\N$, meanwhile, they are all regular for $\beta\in\N$, namely, the length of any cylinder of order $n$ equals $\beta^{-n}$. Li and Wu \cite{LiWu08} introduced a classification of $\beta>1$ for characterising the regularity of the lengths of cylinders and then the sizes of all corresponding classes were given by Li, Persson, Wang and Wu \cite{LPWW14} in the sense of measure and dimension. Another different classification of $\beta>1$ was provided by Blanchard \cite{Bla89} from the viewpoint of dynamical system, and then the sizes of all corresponding classes were given by Schmeling \cite{Schme97} in the sense of topology, measure and dimension.

A cylinder with order $n$ is said to be full if it is mapped by the $n$-th iteration of $\beta$-transformation $T_\beta^n$ onto $[0,1)$ (see Definition \ref{Deffull} below, \cite{Wal78} or \cite{DK02}) or equivalently its length is maximal, that is, equal to $\beta^{-n}$ (see Proposition \ref{charFULL} below, \cite{FW12} or \cite{BuWa14}). An admissible word is said to be full if the corresponding cylinder is full. Full words and cylinders have very good properties. For example, Walters \cite{Wal78} proved that for any given $N > 0$, $[0,1)$ is covered by the full cylinders of order at least $N$. Fan and Wang \cite{FW12} obtained some good properties of full cylinders (see Proposition \ref{charFULL} and Proposition \ref{fullzero} below). Bugeaud and Wang \cite{BuWa14} studied the distribution of full cylinders, showed that for $n\ge1$, among every $(n + 1)$ consecutive cylinders of order $n$, there exists at least one full cylinder, and used it to prove a modified mass distribution principle to estimate the Hausdorff dimension of sets defined in terms of $\beta$-expansions. Zheng, Wu and Li  proved that the extremely irregular set is residual with the help of the full cylinders (for details see \cite{ZWL17}).

In this paper, we are interested in the distributions of full and non-full words in $\Sigma_\beta^n$, i.e., the distributions of full and non-full cylinders in $[0,1)$. More precisely, we consider the lexicographically ordered sequence of all order $n$ admissible words, and count the numbers of successive full words and successive non-full words. Or, in what amounts to the same thing, we look at all the fundamental intervals of order $n$, arranged in increasing order along the unit interval, and ask about numbers of successive intervals where $T_\beta^n$ is onto (and numbers of intervals where it is not onto). Our main results concern the maximal number of successive full words, and the maximal number of successive non-full words as a function of $n$ and $\beta$. In particular, the dependence on $\beta$ is expressed in terms of the expansion of 1 with base $\beta$.

The main objective of this paper is to describe the structure of admissible words and the precise lengths of the maximal runs of full words and non-full words (see Definition \ref{maximal}). The concept of maximal runs is a new way to study the distribution of full words and cylinders.
%For achieving the goal, some criteria, properties and structures of full words and non-full words are studied .
 Firstly Theorem \ref{strADM} gives a unique and clear form of any admissible word, and Theorem \ref{strFULL} and Corollary \ref{tail-non-full} provide some convenient ways to check whether an admissible word is full or not.
   %As basic results of this paper, they give complete characterizations of the structures of full words and non-full words.
  Secondly Theorem \ref{fulllength} describes all the precise lengths of the maximal runs of full words, which indicates that such lengths rely on the nonzero terms in the $\beta$-expansion of $1$. Consequently, the maximal and minimal lengths of the maximal runs of full words are given in Corollary \ref{maxlengthfull} and Corollary \ref{minlengthfull} respectively.  Finally by introducing a function $\tau_\beta$ in Definition \ref{tau}, a similar concept of numeration system and greedy algorithm, we obtain a convenient way to count the consecutive non-full words in Lemma \ref{tau-non-full}, which can easily give the maximal length of the runs of non-full words in Corollary \ref{maxlengthnotfull}  and generalize the result of Bugeaud and Wang mentioned above (see Remark \ref{n+1}).
  % With the help of the maximal length of the runs of non-full words and the way to count the consecutive non-full words mentioned above, we obtain
 Furthermore, all the precise lengths of the maximal runs of non-full words are stated in Theorem \ref{notfulllength}, which depends on the positions of nonzero terms in the $\beta$-expansion of $1$. Moreover, the minimal lengths of the maximal runs of non-full words are obtained in Corollary \ref{minlengthnotfull}.

This paper is organized as follows. In Section 2, we introduce some basic notation and preliminary work needed. In Section 3, we study the structures of admissible words, full words and non-full words as basic results of this paper. In Section 4 and Section 5, we obtain all the precise lengths of the maximal runs of full words and non-full words respectively as the main results.

\section{Notation and preliminaries}

Let us introduce some basic notation and preliminary work needed. Let $\beta>1$.

$\bullet$ Let $T_\beta:[0,1)\rightarrow[0,1)$ be the map:
$$T_\beta(x):=\beta x-\lfloor\beta x \rfloor,\quad x\in[0,1).$$
Let $\mathcal{A}_\beta=\{0,1,\cdots,\beta-1\}$ when $\beta\in\N$, $\mathcal{A}_\beta=\{0,1,\cdots,\lfloor\beta\rfloor\}$ when $\beta\notin\N$ and
$$\epsilon_n(x,\beta) := \lfloor \beta T^{n-1}_\beta (x)\rfloor,\quad n\in\N, x\in[0,1).$$
Then $\epsilon_n(x,\beta) \in \mathcal{A}_\beta$ and
$$x = \sum_{n = 1}^\infty \epsilon_n(x,\beta)\beta^{-n}.$$
The sequence $\epsilon(x,\beta):=\epsilon_1(x,\beta)\epsilon_2(x,\beta)\cdots\epsilon_n(x,\beta)\cdots$
is also called the $\beta$-\textit{expansion} of $x$. The system $([0,1),T_\beta)$ is called a $\beta$-\textit{dynamical system}.

$\bullet$ Define
$$T_\beta(1):=\beta-\lfloor\beta\rfloor\mbox{ and }\epsilon_n(1,\beta) := \lfloor \beta T^{n-1}_\beta (1)\rfloor,\quad n\in\N.$$
Then the number $1$ can also be expanded into a series, denoted by
$$1 = \sum_{n = 1}^\infty \epsilon_n(1,\beta)\beta^{-n}.$$
The sequence $\epsilon(1,\beta):=\epsilon_1(1,\beta)\epsilon_2(1,\beta)\cdots\epsilon_n(1,\beta)\cdots$ is also called the $\beta$-\textit{expansion} of $1$.
For simplicity, we write $\epsilon(1,\beta) = \epsilon_1\epsilon_2\cdots\epsilon_n\cdots.$

$\bullet$
If there are infinitely many $n$ with $\epsilon_n\neq 0$, we say that $\epsilon(1,\beta)$ is infinite. Otherwise, there exists $M\in \mathbb{N}$ such that $\epsilon_M\neq0$ with $\epsilon_j=0$ for all $j>M$,  $\epsilon(1,\beta)$ is said to be finite, sometimes say that $\epsilon(1,\beta)$ is finite with length $M$.
The \textit{modified} $\beta$-expansion of $1$ is defined as
$$\epsilon^*(1,\beta) := \epsilon(1,\beta)$$
if $\epsilon(1,\beta)$ is infinite, and
$$\epsilon^*(1,\beta) := (\epsilon_1\cdots\epsilon_{M-1}(\epsilon_M-1))^\infty$$
if $\epsilon(1,\beta)$ is finite with length $M$. Here for a finite word $w \in \mathcal{A}_\beta^n$, the periodic sequence $w^\infty \in \mathcal{A}_\beta^\N$ means that
$$w^\infty:= w_1w_2\cdots w_n w_1w_2\cdots w_n \cdots.$$
In this paper, we always denote
$$\epsilon^*(1,\beta)=\epsilon_1^*\epsilon_2^*\cdots\epsilon_n^*\cdots$$
no matter whether $\epsilon(1,\beta)$ is finite or not.

$\bullet$ Let $\prec$ and $\preceq$ be the \textit{lexicographic order} in $\mathcal{A}_\beta^\N$. More precisely, $w\prec w'$ means that there exists $k\in\mathbb{N}$ such that $w_i = w_i'$ for all $1\le i < k$ and $w_k < w_k'$. Besides, $w\preceq w'$ means that $w\prec w'$ or $w=w'.$ Similarly, the definitions of $\prec$ and $\preceq$ are extended to the sequences by identifying a finite word $w$ with the sequence $w0^\infty$.

$\bullet$ For any $w\in \mathcal{A}_\beta^\N$, we use $w|_k$ to denote the prefix of $w$ with length $k$, i.e., $w_1w_2\cdots w_k$ where $k\in\N$. For any $w\in \mathcal{A}_\beta^n$, we use $|w|:=n$ to denote the \textit{length} of $w$ and $w|_k$ to denote the prefix of $w$ with length $k$ where $1\le k\le |w|$.

$\bullet$ Let $\sigma:\mathcal{A}_\beta^\N\rightarrow\mathcal{A}_\beta^\N$ be the \textit{shift}
$$\sigma(w_1w_2\cdots) = w_2w_3\cdots \quad \text{for } w \in \mathcal{A}_\beta^\N$$
and $\pi_\beta:\mathcal{A}_\beta^\N\rightarrow\R$ be the projection map
$$\pi_\beta(w)=\frac{w_1}{\beta}+\frac{w_2}{\beta^2}+\cdots+\frac{w_n}{\beta^n}+\cdots \quad \text{for } w \in \mathcal{A}_\beta^\N.$$

\begin{definition}[Admissability]\indent
\begin{itemize}
\item[(1)] A word $w \in \mathcal{A}_\beta^n$ is called \textit{admissible}, if there exists $x \in [0,1)$ such that $\epsilon_i(x,\beta) = w_i$ for $i = 1,\cdots,n$. Denote
$$\Sigma_\beta^n:=\{w \in \mathcal{A}_\beta^n: w \text{ is admissible}\} \text{ and } \Sigma_\beta^*:=\bigcup_{n=1}^\infty\Sigma_\beta^n.$$
\item[(2)] A sequence $w \in \mathcal{A}_\beta^\N$ is called \textit{admissible}, if there exists $x \in [0,1)$ such that $\epsilon_i(x,\beta) = w_i$ for all $i \in \N$. Denote
$$\Sigma_\beta:=\{w \in \mathcal{A}_\beta^\N: w \text{ is admissible}\}.$$
\end{itemize}
\end{definition}

Obviously, if $w\in\Sigma_\beta$, then $w|_n\in\Sigma_\beta^n$ and $w_{n+1}w_{n+2}\cdots\in\Sigma_\beta$ for any $n\in\mathbb{N}$. By the algorithm of $T_\beta$, it is easy to get the following lemma.

%The following two lemmas are basic facts.
%
%\begin{lemma}\label{cutADM}
%\begin{itemize}
%\item[(1)] the word $w_1\cdots w_n$ and the sequence $w_{n+1}w_{n+2}\cdots$ are admissible for any $n\in\N$ and $w\in\Sigma_\beta$;
%\item[(2)] the sequence $w\prec\epsilon(1,\beta)$ for any $w\in\Sigma_\beta$.
%\end{itemize}
%\end{lemma}

\begin{lemma}\label{1cutADM}
For any $n \in \mathbb{N}$, $\epsilon^*(1,\beta)|_n \in\Sigma_\beta^n$ and is maximal in $\Sigma_\beta^n$ with lexicographic order .
\end{lemma}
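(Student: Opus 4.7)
The plan is to reduce both assertions to Parry's admissibility criterion (Lemma \ref{charADM}), which in its standard form characterizes admissibility by demanding that every shift of the sequence be lexicographically $\preceq\epsilon^*(1,\beta)$. With that criterion in hand, both halves of the lemma become short checks, and no explicit construction of $x\in[0,1)$ via the greedy algorithm is needed.

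First, I would establish admissibility of $w:=\epsilon^*(1,\beta)|_n$ by showing that the infinite sequence $w 0^\infty$ is admissible, which by the very definition of $\Sigma_\beta$ furnishes an $x\in[0,1)$ whose first $n$ expansion digits are $w$. The key input is the shift-maximality of $\epsilon^*(1,\beta)$: $\sigma^k\epsilon^*(1,\beta)\preceq\epsilon^*(1,\beta)$ for every $k\ge 0$. This is the standard reason for introducing $\epsilon^*$ in the first place --- in the infinite case it follows from $T_\beta^k(1)\in[0,1)$ for $k\ge 1$, while in the finite case the periodic modification is built precisely so that the inequality survives at the critical shifts. Granting this, for $0\le k<n$ the shift $\sigma^k(w 0^\infty)=\epsilon^*_{k+1}\cdots\epsilon^*_n 0^\infty$ agrees with $\sigma^k\epsilon^*(1,\beta)$ on the first $n-k$ symbols and is zero thereafter, so
$$\sigma^k(w 0^\infty)\ \preceq\ \sigma^k\epsilon^*(1,\beta)\ \preceq\ \epsilon^*(1,\beta),$$
and for $k\ge n$ the shift equals $0^\infty$, which is trivially $\preceq\epsilon^*(1,\beta)$. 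Parry's criterion then gives $w0^\infty\in\Sigma_\beta$, hence $w\in\Sigma_\beta^n$.

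For maximality I argue by contradiction. Suppose some $u\in\Sigma_\beta^n$ satisfies $u\succ\epsilon^*(1,\beta)|_n$, and let $k\le n$ be the least index where they disagree; then $u_k>\epsilon^*_k$. By admissibility of $u$ there is $x\in[0,1)$ with $\epsilon_i(x,\beta)=u_i$ for $i\le n$, and the full expansion $\epsilon(x,\beta)$ lies in $\Sigma_\beta$. Its first $k$ digits are $\epsilon^*_1\cdots\epsilon^*_{k-1}u_k$, so $\epsilon(x,\beta)\succ\epsilon^*(1,\beta)$ already at position $k$, contradicting Parry applied with zero shift. Hence $\epsilon^*(1,\beta)|_n$ is the lex-maximum of $\Sigma_\beta^n$.

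The only real obstacle is bookkeeping around the finite case of $\epsilon(1,\beta)$: one must check that shift-maximality of $\epsilon^*(1,\beta)$ holds at every shift (including those that land inside the modified period) and must compare a $0^\infty$-padded finite word against a non-eventually-zero periodic sequence. Neither step is deep, but both are the kind of routine case check where a stray inequality direction is easy to get wrong; I would address them up front when invoking the shift-maximality property.
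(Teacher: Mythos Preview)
Your argument is correct, but it takes a different route from what the paper intends. The paper does not actually supply a proof of this lemma; it merely says ``By the algorithm of $T_\beta$, it is easy to get the following lemma,'' pointing toward a direct verification using the greedy expansion (e.g., checking that the point $x=\sum_{i=1}^n\epsilon_i^*\beta^{-i}\in[0,1)$ has the right digits, and that any $x\in[0,1)$ has $\epsilon(x,\beta)|_n\preceq\epsilon^*(1,\beta)|_n$ because the greedy algorithm always picks the largest admissible digit). You instead route everything through Parry's criterion (Lemma~\ref{charADM}), which in the paper is stated \emph{after} Lemma~\ref{1cutADM}. There is no circularity, since Parry's criterion is quoted from \cite{Pa60} rather than proved here, but you should be aware that your proof reorders the logical dependencies relative to the paper's presentation. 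One small technical point: the paper's version of Parry requires the \emph{strict} inequality $\sigma^k(w)\prec\epsilon^*(1,\beta)$, whereas your chain of inequalities only yields $\preceq$; you should make explicit that strictness follows because $w0^\infty$ is eventually zero while $\epsilon^*(1,\beta)$ is not, so the two sequences cannot coincide. With that remark added, your proof is complete and arguably cleaner than a bare-hands greedy-algorithm argument.
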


The following criterion for admissible sequence is due to Parry.
\begin{lemma}[\cite{Pa60}]\label{charADM}
Let $w \in \mathcal{A}_\beta^\N$.
Then $w$ is admissible (that is, $w \in \Sigma_\beta$) if and only if
$$\sigma^k (w) \prec \epsilon^*(1,\beta) \quad \text{for all } k \ge 0.$$
\end{lemma}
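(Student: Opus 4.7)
The plan is to prove the two directions of the criterion separately, leaning on two identities that follow from the definitions: the intertwining $\sigma^k \epsilon(x,\beta) = \epsilon(T_\beta^k x,\beta)$ for $x\in[0,1)$, and the inversion $\pi_\beta(\epsilon(x,\beta)) = x$, together with monotonicity of the floor function underlying the greedy digit map.

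For the forward implication, assume $w = \epsilon(x,\beta)$ with $x \in [0,1)$. Then $\sigma^k w = \epsilon(T_\beta^k x,\beta)$ is again the greedy expansion of a point in $[0,1)$, so it suffices to prove $\epsilon(y,\beta) \prec \epsilon^*(1,\beta)$ for every $y \in [0,1)$. At the first index $k$ where $\epsilon(y,\beta)$ and $\epsilon(1,\beta)$ disagree, the identity
$$ y - 1 = \beta^{-(k-1)}\bigl(T_\beta^{k-1}(y) - T_\beta^{k-1}(1)\bigr) $$
combined with $y < 1$ yields $T_\beta^{k-1}(y) < T_\beta^{k-1}(1)$, and applying $\lfloor \beta\,\cdot\,\rfloor$ gives $\epsilon_k(y,\beta) \le \epsilon_k$, strict since the digits differ at $k$. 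This settles the infinite case. When $\epsilon(1,\beta)$ is finite of length $M$, one refines to $\epsilon(y,\beta) \prec \epsilon^*(1,\beta)$ by iteration: the identity $T_\beta^{M-1}(1) = \epsilon_M/\beta$ forces $\epsilon_M(y,\beta) \le \epsilon_M - 1$ as soon as the first $M-1$ digits match, and then $T_\beta^M(y) \in [0,1)$ feeds back into the same argument. The iteration must terminate, since otherwise $\epsilon(y,\beta) = \epsilon^*(1,\beta)$, which would give $y = \pi_\beta(\epsilon^*(1,\beta)) = 1$.

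For the reverse implication, given $w$ satisfying the hypothesis, set $x := \pi_\beta(w)$ and show $x \in [0,1)$ and $\epsilon(x,\beta) = w$. The key computation is $\pi_\beta(\epsilon^*(1,\beta)) = 1$, trivial in the infinite case and a geometric-series identity in the finite case. To deduce $x < 1$ from $w \prec \epsilon^*(1,\beta)$, I isolate the contribution of the first disagreement between $w$ and $\epsilon^*(1,\beta)$ and control the tail by applying the hypothesis recursively to the shifts $\sigma^k w$, together with Lemma \ref{1cutADM} which guarantees that every admissible finite prefix of $w$ is lex-bounded by $\epsilon^*(1,\beta)|_n$. The digit identification then proceeds by induction on $n$: if $w_1,\dots,w_{n-1}$ coincide with the first $n-1$ greedy digits of $x$, then $T_\beta^{n-1}(x) = \pi_\beta(\sigma^{n-1} w)$, and $\sigma^{n-1} w \prec \epsilon^*(1,\beta)$ keeps $T_\beta^{n-1}(x)$ in $[0,1)$, whence $\epsilon_n(x,\beta) = \lfloor \beta T_\beta^{n-1}(x)\rfloor = w_n$.

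The main obstacle is the lack of strict compatibility between $\pi_\beta$ and the lexicographic order: for $\beta \notin \N$ one can have $\pi_\beta(u) = \pi_\beta(v)$ while $u \prec v$, as witnessed by $\epsilon(1,\beta)$ and $\epsilon^*(1,\beta)$ in the finite case, both summing to $1$. The strict condition $w \prec \epsilon^*(1,\beta)$ on its own is therefore insufficient to conclude $\pi_\beta(w) < 1$; it is the full shift-invariant hypothesis $\sigma^k w \prec \epsilon^*(1,\beta)$ for every $k$ that rules out a tail of $w$ coinciding with the periodic tail of $\epsilon^*(1,\beta)$ and so underpins numerical strictness. Executing this cleanly in the finite case, where $\epsilon^*(1,\beta)$ is itself periodic, is the most delicate technical step.
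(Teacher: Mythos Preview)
The paper does not supply its own proof of this lemma: it is stated with a citation to Parry~\cite{Pa60} and used as a black box. So there is no in-paper argument to compare your proposal against; I can only assess the proposal on its own terms.

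Your forward direction is sound. The identity $y-1=\beta^{-(k-1)}\bigl(T_\beta^{k-1}(y)-T_\beta^{k-1}(1)\bigr)$ once the first $k-1$ digits agree is exactly what drives the inequality, and the iteration you describe in the finite case correctly handles the passage from $\epsilon(1,\beta)$ to $\epsilon^*(1,\beta)$.

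In the reverse direction your strategy is right, but one step is circular as written. You invoke Lemma~\ref{1cutADM} to say that ``every admissible finite prefix of $w$ is lex-bounded by $\epsilon^*(1,\beta)|_n$'', yet admissibility of $w$ (and hence of its prefixes) is precisely what you are trying to establish. The lemma cannot be used at this point. What you actually need, and what your recursive tail-control idea does yield once made explicit, is the purely numerical statement: if $\sigma^k v\prec\epsilon^*(1,\beta)$ for all $k\ge0$ then $\pi_\beta(v)<1$. A clean way to close this: from the first disagreement at position $m$ one gets
\[
\pi_\beta(v)-1=\beta^{-m}\Bigl[(v_m-\epsilon_m^*)+\pi_\beta(\sigma^m v)-\pi_\beta(\sigma^m\epsilon^*(1,\beta))\Bigr]\le\beta^{-m}\bigl(\pi_\beta(\sigma^m v)-1\bigr),
\]
so $\pi_\beta(v)>1$ would force $\pi_\beta(\sigma^{m}v)-1\ge\beta^m(\pi_\beta(v)-1)$ and, iterating, an unbounded sequence of values of $\pi_\beta$, contradicting the trivial bound $\pi_\beta\le\lfloor\beta\rfloor/(\beta-1)$. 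Equality $\pi_\beta(v)=1$ then forces $\pi_\beta(\sigma^m\epsilon^*(1,\beta))=0$, impossible since $\epsilon^*(1,\beta)$ never terminates in $0^\infty$. With $\pi_\beta(\sigma^k w)<1$ for every $k$ in hand, your inductive digit identification goes through without appeal to Lemma~\ref{1cutADM}.
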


As a corollary of Parry's criterion, the following lemma can be found in \cite{Pa60}.
\begin{lemma}\label{1exp}Let $w$ be a sequence of non-negative integers. Then $w$ is the $\beta$-\textit{expansion} of $1$ for some $\beta>1$ if and only if $\sigma^kw\prec w$ for all $k\ge 1$. Moreover, such $\beta$ satisfies $w_1\le\beta<w_1+1$.
\end{lemma}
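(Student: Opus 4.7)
I prove both directions. The forward direction uses Lemma \ref{charADM}: if $w = \epsilon(1,\beta)$ and $k \geq 1$, then $\sigma^k w$ is the $\beta$-expansion of $T^k_\beta(1) \in [0,1)$, hence admissible, so $\sigma^k w \prec \epsilon^*(1,\beta)$. When $\epsilon(1,\beta)$ is infinite, $\epsilon^*(1,\beta) = w$ and we are done. When $\epsilon(1,\beta)$ is finite of length $M$, a direct comparison gives $\epsilon^*(1,\beta) \prec w$ (they coincide on positions $1, \ldots, M-1$ while $\epsilon^*_M = w_M - 1 < w_M$), and transitivity of $\prec$ yields $\sigma^k w \prec w$. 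The bound $w_1 \leq \beta < w_1+1$ is immediate from $w_1 = \lfloor \beta \rfloor$.

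For the converse, the shift condition forces $w_1 \geq 1$ (else $w \equiv 0$ and $\sigma w = w$) and $w_n \leq w_1$ for all $n$ (by comparing first letters). Hence $f(\beta) := \sum_{n \geq 1} w_n \beta^{-n}$ is continuous and strictly decreasing on $\beta > 1$, with $f(w_1) \geq 1$ and $f(w_1+1) < 1$ (strict because $w = (w_1)^\infty$ would give $\sigma w = w$, contradicting the hypothesis), so the intermediate value theorem gives a unique $\beta \in [w_1, w_1+1)$ with $f(\beta) = 1$. It remains to verify $w = \epsilon(1,\beta)$, which I carry out by induction on $k$: assuming $w_i = \epsilon_i(1,\beta)$ for $i < k$, the identity $\beta S_{k-1} = w_k + S_k$ (where $S_k := \sum_{n \geq 1} w_{k+n}\beta^{-n}$, immediate from $f(\beta) = 1$) propagates the induction, giving $\epsilon_k(1,\beta) = \lfloor \beta T^{k-1}_\beta(1)\rfloor = w_k$ and $T^k_\beta(1) = S_k$, provided $S_k < 1$.

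The main obstacle is therefore to establish $S_k < 1$ for every $k \geq 1$. I plan a two-step argument using the set $A := \{u \in \N^{\N} : \sigma^m u \preceq w \text{ for all } m \geq 0\}$, which is closed (hence compact) in the product topology and contains $\sigma^k w$. First I show the non-strict bound $\pi_\beta(u) \leq 1$ on $A$: let $M := \sup_A \pi_\beta$, attained at some $u^* \in A$ by compactness and continuity of $\pi_\beta$. If $M > 1$ then $u^* \prec w$ (since $\pi_\beta(w) = 1 < M$ rules out $u^* = w$), and letting $j$ be the first disagreement position, the identity
\begin{equation*}
\pi_\beta(w) - \pi_\beta(u^*) = \beta^{-j}\big[(w_j - u^*_j) + \pi_\beta(\sigma^j w) - \pi_\beta(\sigma^j u^*)\big],
\end{equation*}
combined with $\pi_\beta(\sigma^j u^*) \leq M$ (since $\sigma^j u^* \in A$) and $w_j - u^*_j \geq 1$, yields $1 - M \geq \beta^{-j}(1 - M)$, forcing $\beta \leq 1$, a contradiction. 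Second, to exclude $S_k = 1$, I apply the same identity with $u^* = \sigma^k w$: equality throughout forces $w_j - (\sigma^k w)_j = 1$, $\pi_\beta(\sigma^j w) = 0$ and $\pi_\beta(\sigma^{k+j} w) = 1$; but $\pi_\beta(\sigma^j w) = 0$ means $w_n = 0$ for all $n > j$, whence $\sigma^{k+j} w = 0^\infty$ and $\pi_\beta(\sigma^{k+j} w) = 0$, contradicting the third equality.
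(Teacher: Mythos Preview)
The paper does not prove this lemma; it merely cites Parry \cite{Pa60}. Your self-contained argument therefore goes beyond what the paper offers, and it is essentially correct. The forward direction is clean, and in the converse the compactness argument on $A$ --- first bounding $\sup_A \pi_\beta \le 1$ via a maximiser $u^*$, then ruling out $S_k = 1$ by forcing the tail of $w$ to vanish --- is an elegant way to close the loop that a naive induction on $k$ cannot.

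One small gap: you write $f(w_1) \ge 1$, but $f$ is only declared on $(1,\infty)$, so this is undefined when $w_1 = 1$. The repair is immediate: for $w_1 = 1$ use $\lim_{\beta \to 1^+} f(\beta) = \sum_n w_n \ge 2$, valid provided $w \neq 10^\infty$. Note incidentally that $w = 10^\infty$ does satisfy $\sigma^k w \prec w$ for all $k \ge 1$ yet is not $\epsilon(1,\beta)$ for any $\beta > 1$ (it corresponds to the degenerate value $\beta = 1$); this boundary case is tacitly excluded from the lemma and is irrelevant to how the lemma is actually used in the paper.
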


\begin{definition}[cylinder]\label{Cylinder}
Let $w\in\Sigma_\beta^*$. We call
$$[w]:=\{v\in\Sigma_\beta:v_i=w_i \text{ for all }1\le i\le |w|\}$$
the \textit{cylinder} generated by $w$ and
$$I(w):=\pi_\beta([w])$$
the \textit{cylinder} in [0,1) generated by $w$.
\end{definition}

\begin{definition}[full words and cylinders]\label{Deffull}
Let $w\in\Sigma_\beta^n$. If $T_\beta^nI(w)=[0, 1)$, we call the word $w$ and the cylinders $[w],I(w)$ \textit{full}. Otherwise, we call them \textit{non-full}.
\end{definition}

\begin{lemma}[\cite{LiWu08}, \cite{FW12}, \cite{BuWa14}]\label{admfull}
Suppose the word $w_1\cdots w_n$ is admissible and $w_n \neq 0$. Then $w_1 \cdots w_{n-1} w_n'$ is full for any $w_n' < w_n$.
\end{lemma}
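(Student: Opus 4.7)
The plan is to prove this geometrically by tracking the image of the cylinder $I(w_1\cdots w_{n-1})$ under the iterate $T_\beta^{n-1}$ and then applying $T_\beta$ one more time. The key observation, which I would establish first, is that on the cylinder $I(w_1\cdots w_{n-1})$ the map $T_\beta^{n-1}$ is an affine bijection with slope $\beta^{n-1}$ that sends the left endpoint $\pi_\beta(w_1\cdots w_{n-1}0^\infty)$ to $0$; this follows by induction on the iterate, since within each successive admissible cylinder $I(w_1\cdots w_k)$ the map $T_\beta$ is just $x\mapsto\beta x - w_{k+1}$. Consequently, $T_\beta^{n-1}I(w_1\cdots w_{n-1})=[0,L)$ for some $L>0$.

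Next I would use the hypothesis that $w_1\cdots w_n$ is admissible. Since $\epsilon_n(x,\beta)=w_n$ is equivalent to $T_\beta^{n-1}(x)\in[w_n/\beta,(w_n+1)/\beta)$, admissibility produces a point in the intersection $[0,L)\cap[w_n/\beta,(w_n+1)/\beta)$, which forces $L>w_n/\beta$. Given $w_n'<w_n$, i.e.\ $w_n'\le w_n-1$, I then deduce $(w_n'+1)/\beta\le w_n/\beta<L$, so the entire half-open interval $[w_n'/\beta,(w_n'+1)/\beta)$ sits inside $[0,L)$. Therefore $T_\beta^{n-1}$ maps $I(w_1\cdots w_{n-1}w_n')$ bijectively onto $[w_n'/\beta,(w_n'+1)/\beta)$, and applying $T_\beta$ once more yields $T_\beta^n I(w_1\cdots w_{n-1}w_n')=[0,1)$, which is precisely the definition of fullness.

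An equivalent symbolic route would use Parry's criterion (Lemma \ref{charADM}): to show fullness one would pick any admissible tail $a\in\Sigma_\beta$ and verify that $w_1\cdots w_{n-1}w_n'a_1a_2\cdots$ is admissible, handling the shifts $\sigma^k$ in three ranges ($k\ge n$ reduces to $a$; $k=n-1$ uses $w_n'<w_n\le\epsilon_1^*$; and $0\le k\le n-2$ uses that the modified prefix is lexicographically smaller than the original admissible prefix). I would prefer the geometric approach because it avoids repackaging admissibility and shows the length $|I(w_1\cdots w_{n-1}w_n')|=\beta^{-n}$ transparently. There is no serious obstacle; the only step requiring care is justifying the shape $[0,L)$ of the image, but that is immediate from the piecewise-linear description of $T_\beta$ restricted to cylinders.
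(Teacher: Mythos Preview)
Your geometric argument is correct: the identification of $T_\beta^{n-1}I(w_1\cdots w_{n-1})$ with an interval $[0,L)$, the lower bound $L>w_n/\beta$ from admissibility of $w_1\cdots w_n$, and the containment $[w_n'/\beta,(w_n'+1)/\beta)\subset[0,L)$ all go through exactly as you describe. The symbolic alternative you sketch is also valid.

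There is nothing to compare against here: the paper states this lemma with attribution to \cite{LiWu08}, \cite{FW12}, \cite{BuWa14} and gives no proof of its own. Your geometric argument is essentially the one underlying the cited treatments (in particular \cite{FW12}, where cylinder lengths are the central object), so you are not diverging from the literature; you have simply supplied what the paper chose to quote.
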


\section{The structures of admissible words, full words and non-full words}

The following proposition is a criterion of full words. The equivalence of (1), (2) and (4) can be found in \cite{FW12}. We give some proofs for self-contained and more characterizations (3), (5), (6) are given here.
\begin{proposition}\label{charFULL}
Let $w \in \Sigma_\beta^n$. Then the following are equivalent.
\begin{itemize}
\item[\emph{(1)}] $w$ is full, i.e., $T_\beta^nI(w)=[0, 1)$;
\item[\emph{(2)}] $|I(w)| = \beta^{-n}$;
\item[\emph{(3)}] The sequence $ww'$ is admissible for any $w' \in \Sigma_\beta$;
\item[\emph{(4)}] The word $ww'$ is admissible for any $w' \in \Sigma_\beta^*$;
\item[\emph{(5)}] The word $w\epsilon_1^*\cdots\epsilon_k^*$ is admissible for any $k\ge 1$;
\item[\emph{(6)}] $\sigma^n[w]=\Sigma_\beta$.
\end{itemize}
\end{proposition}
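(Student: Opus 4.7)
My plan is to prove the six-way equivalence by establishing (1) $\Leftrightarrow$ (2), (1) $\Leftrightarrow$ (6), (6) $\Leftrightarrow$ (3), (3) $\Rightarrow$ (4) $\Rightarrow$ (5), and finally the closing implication (5) $\Rightarrow$ (3). The first two equivalences are geometric; the rest are combinatorial and rest on Parry's admissibility criterion (Lemma \ref{charADM}).

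For the geometric part, I use that $T_\beta^n$ is affine with slope $\beta^n$ on any order-$n$ cylinder, since each $T_\beta$ acts as $x \mapsto \beta x - c$ on an order-$1$ cylinder. Because $T_\beta^n$ sends the left endpoint $\pi_\beta(w0^\infty)$ of $I(w)$ to $0$, one obtains $T_\beta^n I(w) = [0, \beta^n |I(w)|) \subseteq [0,1)$, which equals $[0,1)$ precisely when $|I(w)| = \beta^{-n}$; this gives (1) $\Leftrightarrow$ (2). For (1) $\Leftrightarrow$ (6), the identity $\sigma^n \epsilon(x,\beta) = \epsilon(T_\beta^n x,\beta)$ shows that $\sigma^n[w] = \{\epsilon(y,\beta) : y \in T_\beta^n I(w)\}$, and using the bijection $y \leftrightarrow \epsilon(y,\beta)$ between $[0,1)$ and $\Sigma_\beta$, this set equals $\Sigma_\beta$ iff $T_\beta^n I(w) = [0,1)$.

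For the combinatorial part, (6) $\Leftrightarrow$ (3) is immediate unpacking: $w' \in \sigma^n[w]$ iff there exists $v \in [w]$ with $\sigma^n v = w'$, i.e., iff $ww'$ is admissible. For (3) $\Rightarrow$ (4), given $w' \in \Sigma_\beta^*$, I pick any $x \in I(w')$, extend $w'$ to the admissible sequence $\epsilon(x,\beta)$, apply (3), and truncate to obtain admissibility of $ww'$. The implication (4) $\Rightarrow$ (5) is immediate from Lemma \ref{1cutADM}, which provides $\epsilon_1^*\cdots\epsilon_k^* \in \Sigma_\beta^k$.

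The delicate step is (5) $\Rightarrow$ (3). Given $w' \in \Sigma_\beta$, Parry's criterion reduces admissibility of $ww'$ to showing $\sigma^k(ww') \prec \epsilon^*(1,\beta)$ for every $k \ge 0$; shifts $k \ge n$ follow from the admissibility of $w'$, while for $0 \le k < n$ the claim becomes $w_{k+1}\cdots w_n w' \prec \epsilon^*(1,\beta)$. Hypothesis (5) yields $w_{k+1}\cdots w_n \epsilon_1^*\cdots\epsilon_m^* \prec \epsilon^*(1,\beta)$ for every $m \ge 1$. If the lex comparison is already decided within the prefix $w_{k+1}\cdots w_n$ against $\epsilon_1^*\cdots\epsilon_{n-k}^*$, the conclusion holds regardless of $w'$; otherwise the two prefixes coincide and (5) gives $\epsilon_1^*\cdots\epsilon_m^* \prec \sigma^{n-k}\epsilon^*(1,\beta)$ for all $m$. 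The main obstacle is to close this case, which I do using the standing inequality $\sigma^j \epsilon^*(1,\beta) \preceq \epsilon^*(1,\beta)$ for all $j \ge 1$ (the modified-expansion analogue of Lemma \ref{1exp}, verified separately in the finite and infinite cases of $\epsilon(1,\beta)$) combined with a limiting argument: if $\sigma^{n-k}\epsilon^*(1,\beta) \prec \epsilon^*(1,\beta)$ strictly, then for large $m$ the prefix $\epsilon_1^*\cdots\epsilon_m^*$ would lex-exceed $\sigma^{n-k}\epsilon^*(1,\beta)|_m$, contradicting the hypothesis. Hence $\sigma^{n-k}\epsilon^*(1,\beta) = \epsilon^*(1,\beta)$, and admissibility of $w'$ yields $w' \prec \epsilon^*(1,\beta) = \sigma^{n-k}\epsilon^*(1,\beta)$, completing the case and the proof.
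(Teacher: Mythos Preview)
Your proof is correct and follows essentially the same cycle of implications as the paper, with one notable difference in how you close the loop. The paper proves $(5)\Rightarrow(1)$ directly: given $x\in[0,1)$, it lets $m$ be the first position where $\epsilon(x,\beta)$ and $\epsilon^*(1,\beta)$ differ, then sandwiches
\[
\sigma^k(w\,\epsilon(x,\beta)) \prec \sigma^k(w\,\epsilon_1^*\cdots\epsilon_m^*0^\infty) \prec \epsilon^*(1,\beta),
\]
the last inequality coming straight from (5) and Parry. This avoids any case split on whether $w_{k+1}\cdots w_n$ equals $\epsilon_1^*\cdots\epsilon_{n-k}^*$. You instead prove $(5)\Rightarrow(3)$ and, in the case of equality, invoke the auxiliary fact $\sigma^j\epsilon^*(1,\beta)\preceq\epsilon^*(1,\beta)$ together with a limiting argument to force $\sigma^{n-k}\epsilon^*(1,\beta)=\epsilon^*(1,\beta)$; this is valid but extracts a stronger (periodicity) conclusion than is actually needed and requires the extra lemma. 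The paper's first-difference trick is the cleaner closure; everything else in your argument matches the paper's.
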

\begin{proof}$(1)\Rightarrow(2)$ Since $w$ is full, $T_\beta^nI(w)=[0,1)$. Noting that
$$x=\frac{w_1}{\beta}+\cdots+\frac{w_n}{\beta^n}+\frac{T_\beta^nx}{\beta^n}\quad\text{for any $x\in I(w)$},$$
we can get
$$I(w)=[\frac{w_1}{\beta}+\cdots+\frac{w_n}{\beta^n},\frac{w_1}{\beta}+\cdots+\frac{w_n}{\beta^n}+\frac{1}{\beta^n}).$$
 Therefore $|I(w)| = \beta^{-n}$.
\newline$(2)\Rightarrow(3)$ Let $x,x'\in[0,1)$ such that $\epsilon(x,\beta)=w0^\infty$ and $ \epsilon(x',\beta)=w'$. Then
$$x=\frac{w_1}{\beta}+\cdots+\frac{w_n}{\beta^n}\quad\text{and}\quad x'=\frac{w_1'}{\beta}+\frac{w_2'}{\beta^2}+\cdots.$$
Let
$$y=x+\frac{x'}{\beta^n}=\frac{w_1}{\beta}+\cdots+\frac{w_n}{\beta^n}+\frac{w_1'}{\beta^{n+1}}+\frac{w_2'}{\beta^{n+2}}\cdots.$$
We need to prove $ww'\in\Sigma_\beta$. It suffices to prove $y\in[0,1)$ and $\epsilon(y,\beta)=ww'$. In fact, since $I(w)$ is a left-closed and
right-open interval with $\frac{w_1}{\beta}+\cdots+\frac{w_n}{\beta^n}$ as its left endpoint and $|I(w)| = \beta^{-n}$, we get
$$I(w)=[\frac{w_1}{\beta}+\cdots+\frac{w_n}{\beta^n},\frac{w_1}{\beta}+\cdots+\frac{w_n}{\beta^n}+\frac{1}{\beta^n})=[x,x+\frac{1}{\beta^n}).$$
So $y\in I(w)\subset[0,1)$ and $\epsilon_1(y,\beta)=w_1,\cdots,\epsilon_n(y,\beta)=w_n$. That is
$$y=\frac{w_1}{\beta}+\cdots+\frac{w_n}{\beta^n}+\frac{T_\beta^ny}{\beta^n}=x+\frac{T_\beta^ny}{\beta^n},$$
which implies $T_\beta^ny=x'$. Then for any $k\ge 1$,
$$\epsilon_{n+k}(y,\beta)=\lfloor\beta T_\beta^{n+k-1}y\rfloor=\lfloor\beta T_\beta^{k-1}x'\rfloor=\epsilon_k(x',\beta)=w_k'.$$
Thus $\epsilon(y,\beta)=ww'$. Therefore $ww'\in\Sigma_\beta$.
\newline$(3)\Rightarrow(4)$ is obvious.
\newline$(4)\Rightarrow(5)$ follows from $\epsilon_1^*\cdots\epsilon_k^*\in\Sigma_\beta^*$ for any $k\ge 1$.
\newline$(5)\Rightarrow(1)$ We need to prove $T_\beta^nI(w)=[0,1)$. It suffices to show $T_\beta^nI(w)\supset[0,1)$ since the reverse inclusion is obvious. Indeed, let $x\in[0,1)$ and $u=w_1\cdots w_n\epsilon_1(x,\beta)\epsilon_2(x,\beta)\cdots$.
\newline At first, we prove $u\in\Sigma_\beta$. By Lemma \ref{charADM}, it suffices to prove $\sigma^k(u)\prec\epsilon^*(1,\beta)$ for any $k\ge 0$ below.
\newline \textcircled{\small{$1$}} If $k\ge n$, we have
$$\sigma^k(u)=\epsilon_{k-n+1}(x,\beta)\epsilon_{k-n+2}(x,\beta)\cdots=\sigma^{k-n}(\epsilon(x,\beta))\overset{\text{by Lemma \ref{charADM}}}{\prec}\epsilon^*(1,\beta).$$
\newline \textcircled{\small{$2$}} If $0\le k\le n-1$, we have
$$\sigma^k(u)=w_{k+1}\cdots w_n\epsilon_1(x,\beta)\epsilon_2(x,\beta)\cdots.$$
Since $\epsilon(x,\beta)\prec\epsilon^*(1,\beta)$, there exists $m\in\N$ such that $\epsilon_1(x,\beta)=\epsilon_1^*,\cdots,\epsilon_{m-1}(x,\beta)=\epsilon_{m-1}^*$ and $\epsilon_m(x,\beta)<\epsilon_m^*$. Combining $w\epsilon_1^*\cdots\epsilon_m^*\in\Sigma_\beta^*$ and Lemma \ref{charADM}, we get
$$\sigma^k(u)\prec w_{k+1}\cdots w_n\epsilon_1^*\cdots\epsilon_m^*0^\infty=\sigma^k(w\epsilon_1^*\cdots\epsilon_m^*0^\infty)\prec\epsilon^*(1,\beta).$$
Therefore $u\in\Sigma_\beta$.
\newline Let $y\in[0,1)$ such that $\epsilon(y,\beta)=u$. Then $y\in I(w)$. Since
$$\epsilon_k(T_\beta^ny,\beta)=\lfloor\beta T_\beta^{n+k-1}y\rfloor=\epsilon_{n+k}(y,\beta)=\epsilon_k(x,\beta)\quad\text{for any $k\in\N$},$$
we get $x=T_\beta^ny\in T_\beta^nI(w)$.
\newline$(1)\Leftrightarrow(6)$ follows from the facts that the function $\epsilon(\cdot,\beta):[0,1)\rightarrow\Sigma_\beta$ is bijective and the commutativity $\epsilon(T_\beta x,\beta)=\sigma(\epsilon(x,\beta))$.
\end{proof}

\begin{proposition}\label{fullzero}
Let $w, w'\in\Sigma_\beta^*$ be full and $|w|=n\in\N$. Then
\begin{itemize}
\item[\emph{(1)}] the word $ww'$ is full;
\item[\emph{(2)}] the word $\sigma^k (w):=w_{k+1}\cdots w_n$ is full for any $1\le k<n$ ;
\item[\emph{(3)}] the digit $w_n<\lfloor\beta\rfloor$ if $\beta\notin\N$. In particular, $w_n=0$ if $1<\beta<2$.
\end{itemize}
\end{proposition}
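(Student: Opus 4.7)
The plan is to derive all three parts from Proposition~\ref{charFULL}, in particular characterization~(4), which states that $u\in\Sigma_\beta^*$ is full precisely when $uv\in\Sigma_\beta^*$ for every $v\in\Sigma_\beta^*$.

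For~(1) I would fix an arbitrary $v\in\Sigma_\beta^*$, use fullness of $w'$ to get $w'v\in\Sigma_\beta^*$, and then use fullness of $w$ applied to the admissible word $w'v$ to obtain $(ww')v=w(w'v)\in\Sigma_\beta^*$; since $v$ was arbitrary, characterization~(4) gives that $ww'$ is full. For~(2) I would again fix $v\in\Sigma_\beta^*$: fullness of $w$ yields some $x\in[0,1)$ whose $\beta$-expansion begins $w_1\cdots w_nv$, and since $\epsilon_i(T_\beta^kx,\beta)=\epsilon_{k+i}(x,\beta)$ for all $i\ge 1$, the point $T_\beta^kx\in[0,1)$ witnesses the admissibility of $w_{k+1}\cdots w_nv=\sigma^k(w)\,v$. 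Characterization~(4) then forces $\sigma^k(w)$ to be full.

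For~(3) the strategy is to reduce to single digits via~(2). If $n\ge 2$, applying~(2) with $k=n-1$ shows that the one-letter word $w_n\in\Sigma_\beta^1$ is full; if $n=1$, then $w=w_n$ is already full by hypothesis. Directly from the definition of $T_\beta$, any single digit $a\in\mathcal{A}_\beta$ determines
\[
I(a)=\{x\in[0,1):\lfloor\beta x\rfloor=a\}=\bigl[a/\beta,\,\min\{(a+1)/\beta,\,1\}\bigr).
\]
If $\beta\notin\N$ and $w_n=\lfloor\beta\rfloor$, then $(w_n+1)/\beta>1$, so $I(w_n)=[\lfloor\beta\rfloor/\beta,1)$ and
\[
T_\beta I(w_n)=\bigl[0,\,\beta-\lfloor\beta\rfloor\bigr)\subsetneq[0,1),
\]
contradicting fullness of $w_n$. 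Hence $w_n<\lfloor\beta\rfloor$, and in the case $1<\beta<2$ this specializes immediately to $w_n=0$.

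Parts~(1) and~(2) are thus essentially formal consequences of characterization~(4). The only genuinely new ingredient is the contradiction in~(3), and the main obstacle there is simply to notice that one should reduce to a single-letter subword; once that reduction is in hand, the geometric computation of $T_\beta I(w_n)$ is transparent.
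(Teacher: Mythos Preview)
Your proof is correct and follows essentially the same approach as the paper's: all three parts are reduced to the characterizations of fullness in Proposition~\ref{charFULL}, and part~(3) proceeds by reducing to the single-letter suffix via~(2) and then deriving a contradiction from $w_n=\lfloor\beta\rfloor$. The only cosmetic differences are that the paper prefers characterization~(5) where you use~(4), and in~(3) the paper contradicts the length criterion $|I(w_n)|=\beta^{-1}$ rather than computing $T_\beta I(w_n)$ directly.
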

\begin{proof}
\begin{itemize}
\item[(1)] A proof has been given in \cite{BuWa14}. We give another proof here to be self-contained. Since $w'$ is full, by Proposition \ref{charFULL} (5) we get $w'\epsilon_1^*\cdots\epsilon_m^*\in\Sigma_\beta^*$  for any $m\ge 1$. Then $ww'\epsilon_1^*\cdots\epsilon_m^*\in\Sigma_\beta^*$ by the fullness of $w$ and Proposition \ref{charFULL} (4), which implies that $ww'$ is full by Proposition \ref{charFULL} (5).
\item[(2)] Since $w$ is full , by Proposition \ref{charFULL} (5) we get $w_1\cdots w_n\epsilon_1^*\cdots\epsilon_m^*\in\Sigma_\beta^*$, also $w_{k+1}\cdots w_n\epsilon_1^*\cdots\epsilon_m^*$ $\in\Sigma_\beta^*$ for any $m\ge 1$. Therefore $w_{k+1}\cdots w_n$ is full by Proposition \ref{charFULL} (5).
\item[(3)] Since $w$ is full, by (2) we know that $\sigma^{n-1}w=w_n$ is full. Then $|I(w_n)|=1/\beta$ by Proposition \ref{charFULL} (2). Suppose $w_n=\lfloor\beta\rfloor$, then $I(w_n)=I(\lfloor\beta\rfloor)=[\lfloor\beta\rfloor/\beta,1)$ and $|I(w_n)|=1-\lfloor\beta\rfloor/\beta<1/\beta$ which is a contradiction. Therefore $w_n\neq\lfloor\beta\rfloor$. So $w_n<\lfloor\beta\rfloor$ noting that $w_n\le \lfloor\beta\rfloor$.
\end{itemize}
\end{proof}

\begin{proposition}\label{exaDIV}
(1) Any truncation of $\epsilon(1,\beta)$ is not full (if it is admissible). That is, $\epsilon(1,\beta)|_k$ is not full for any $k \in \N$ (if it is admissible).
\item(2) Let $k \in \mathbb{N}$. Then $\epsilon^*(1,\beta)|_k$ is full if and only if $\epsilon(1,\beta)$ is finite with length $M$ which exactly divides $k$, i.e., $M|k$.
\end{proposition}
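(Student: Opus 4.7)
For part (1), I would derive a contradiction by assuming $w:=\epsilon(1,\beta)|_k$ is admissible and full. The key preliminary step is to verify that the entire tail $\sigma^k\epsilon(1,\beta)$ itself lies in $\Sigma_\beta$ for every $k\ge 1$. By Lemma \ref{1exp} each shift $\sigma^{j+k}\epsilon(1,\beta)$ with $j+k\ge 1$ is strictly lex-smaller than $\epsilon(1,\beta)$; a brief case distinction then upgrades this to the strict inequality $\sigma^{j+k}\epsilon(1,\beta)\prec\epsilon^*(1,\beta)$: in the infinite case $\epsilon^*(1,\beta)=\epsilon(1,\beta)$ and there is nothing more to do, while in the finite-of-length-$M$ case one compares around position $M$ (where $\epsilon_M^*=\epsilon_M-1$ and the tail of $\sigma^{j+k}\epsilon(1,\beta)$ is $0^\infty$) to still obtain strictness. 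Lemma \ref{charADM} then yields $\sigma^k\epsilon(1,\beta)\in\Sigma_\beta$. Applying Proposition \ref{charFULL}(3) to the supposedly full $w$ with $w'=\sigma^k\epsilon(1,\beta)$ would force $w\cdot\sigma^k\epsilon(1,\beta)=\epsilon(1,\beta)$ to lie in $\Sigma_\beta$; but $\epsilon(1,\beta)\notin\Sigma_\beta$ by direct comparison with $\epsilon^*(1,\beta)$ (equality in the infinite case, strict excess at position $M$ in the finite case), which contradicts Lemma \ref{charADM}.

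For part (2), I plan a direct length computation and an appeal to Proposition \ref{charFULL}(2). The pivotal observation is that $\epsilon^*(1,\beta)|_k$ is lex-maximal in $\Sigma_\beta^k$ (Lemma \ref{1cutADM}); since the order-$k$ cylinders partition $[0,1)$ and are ordered by their left endpoints, $I(\epsilon^*(1,\beta)|_k)$ is the rightmost cylinder, so its right endpoint must be $1$. Hence
\[
|I(\epsilon^*(1,\beta)|_k)|=1-\sum_{i=1}^k\epsilon_i^*\beta^{-i}.
\]
If $\epsilon(1,\beta)$ is infinite then $\epsilon^*(1,\beta)|_k=\epsilon(1,\beta)|_k$ and part (1) immediately rules out fullness. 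Otherwise $\epsilon(1,\beta)$ is finite with length $M$; write $k=qM+r$ with $0\le r<M$ and exploit the periodicity $\epsilon^*(1,\beta)=(\epsilon_1\cdots\epsilon_{M-1}(\epsilon_M-1))^\infty$. A complete period contributes $\sum_{s=1}^M\epsilon_s^*\beta^{-s}=1-\beta^{-M}$ (from $\sum_{s=1}^M\epsilon_s\beta^{-s}=1$), so the first $qM$ digits sum to $1-\beta^{-qM}$ by geometric series; the remaining $r$ digits are $\epsilon_1,\ldots,\epsilon_r$, whose weighted sum is $1-\beta^{-r}T_\beta^r(1)$ by the standard identity $1=\sum_{s=1}^r\epsilon_s\beta^{-s}+\beta^{-r}T_\beta^r(1)$. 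Assembling the pieces yields
\[
|I(\epsilon^*(1,\beta)|_k)|=\beta^{-k}T_\beta^r(1).
\]
Fullness $|I|=\beta^{-k}$ is therefore equivalent to $T_\beta^r(1)=1$. Since $T_\beta^0(1)=1$ while $T_\beta^r(1)\in[0,1)$ for $r\ge 1$, this happens precisely when $r=0$, i.e., $M\mid k$.

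The main bookkeeping subtlety is the length computation itself: one has to remember that each period of $\epsilon^*(1,\beta)$ ends in $\epsilon_M-1$ rather than $\epsilon_M$, and correctly split $\sum_{i=1}^k\epsilon_i^*\beta^{-i}$ into a complete-period contribution (telescoped by geometric series via $1-\beta^{-M}$) and a partial tail (controlled by $T_\beta^r(1)$). Once the identity $|I(\epsilon^*(1,\beta)|_k)|=\beta^{-k}T_\beta^r(1)$ is in hand, both directions of the equivalence fall out immediately.
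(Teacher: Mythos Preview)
Your argument for part (1) is correct and follows the same logic as the paper's: the tail $\sigma^k\epsilon(1,\beta)$ is admissible, so if $\epsilon(1,\beta)|_k$ were full, Proposition~\ref{charFULL}(3) would force $\epsilon(1,\beta)\in\Sigma_\beta$, a contradiction. The only difference is that the paper obtains admissibility of the tail in one line by recognizing it as $\epsilon(T_\beta^k 1,\beta)$, the $\beta$-expansion of a point in $[0,1)$, whereas you verify Parry's criterion by hand; your route works but the shortcut is tidier.

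For part (2) you take a genuinely different path from the paper. The paper argues symbolically: for the ``if'' direction it invokes Proposition~\ref{charFULL}(5), noting that $\epsilon_1^*\cdots\epsilon_{pM}^*\epsilon_1^*\cdots\epsilon_n^*=\epsilon^*(1,\beta)|_{k+n}$ is admissible for every $n$ by Lemma~\ref{1cutADM}; for ``only if'' it assumes fullness with $M\nmid k$, concatenates with the admissible tail $\epsilon_{k-pM+1}\cdots\epsilon_M 0^\infty$ via Proposition~\ref{charFULL}(3), and lands on a sequence whose $\pi_\beta$-image equals $1$, which is impossible. You instead go through the metric characterization Proposition~\ref{charFULL}(2): since $I(\epsilon^*(1,\beta)|_k)$ is the rightmost order-$k$ cylinder, its length is $1-\sum_{i=1}^k\epsilon_i^*\beta^{-i}$, and your periodic-sum computation collapses this to $\beta^{-k}T_\beta^r(1)$ with $r=k\bmod M$, reducing fullness to $r=0$. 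Your approach is more quantitative---it yields the exact cylinder length, not merely whether it equals $\beta^{-k}$---while the paper's argument stays entirely within the symbolic framework and avoids any series manipulation. Both are clean; yours gives a little extra information for free.
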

\begin{proof}(1) We show the conclusion by the cases that $\epsilon(1,\beta)$ is finite or infinite.
\newline Cases 1. $\epsilon(1,\beta)$ is finite with length $M$.
\newline \textcircled{\small{$1$}} If $k\ge M$, then $\epsilon(1,\beta)|_k=\epsilon_1\cdots\epsilon_M 0^{k-M}$ is not admissible.
\newline \textcircled{\small{$2$}} If $1\le k \le M-1$, combining $\epsilon_{k+1}\cdots\epsilon_M0^\infty=\epsilon(T_\beta^k1,\beta)\in\Sigma_\beta$, $\epsilon_1\cdots\epsilon_k\epsilon_{k+1}\cdots\epsilon_M0^\infty=\epsilon(1,\beta)\notin\Sigma_\beta$ and Proposition \ref{charFULL} (1) (3), we know that $\epsilon(1,\beta)|_k=\epsilon_1\cdots\epsilon_k$ is not full.
\newline Cases 2. $\epsilon(1,\beta)$ is infinite. It follows from the similar proof with Case 1 \textcircled{\small{$2$}}.
%\newline Combing $\epsilon_{k+1}\epsilon_{k+2}\cdots=\epsilon(T_\beta^k1,\beta)\in\Sigma_\beta$, $\epsilon_1\cdots\epsilon_k\epsilon_{k+1}\epsilon_{k+2}\cdots\notin\Sigma_\beta$ and Proposition \ref{charFULL} (1) (3), we know that $\epsilon(1,\beta)|_k=\epsilon_1\cdots\epsilon_k$ is not full.
\item(2) $\boxed{\Leftarrow}$ Let $p\in\mathbb{N}$ with $k=pM$. For any $n\ge 1$, we know that $\epsilon_1^* \cdots \epsilon_{pM}^* \epsilon_1^* \cdots \epsilon_n^*=\epsilon^*(1,\beta)|_{k+n}$ is admissible by Lemma \ref{1cutADM}. Therefore $\epsilon^*(1,\beta)|_k=\epsilon_1^* \cdots \epsilon_{pM}^*$ is full by Proposition \ref{charFULL} (1) (5).
\newline$\boxed{\Rightarrow}$ (By contradiction) Suppose that the conclusion is not true, that is, either $\epsilon(1,\beta)$ is infinite or finite with length $M$, but $M$ does not divide $k$ exactly.
\newline \textcircled{\small{$1$}} If $\epsilon(1,\beta)$ is infinite, then $\epsilon^*(1,\beta)|_k=\epsilon(1,\beta)|_k$ is not full by (1), which contradicts our condition.
\newline \textcircled{\small{$2$}} If $\epsilon(1,\beta)$ is finite with length $M$, but $M\nmid k$, then there exists $p\ge 0$ such that $pM<k<pM+M$. Since $\epsilon^*(1,\beta)|_k$ is full, combining
$$\epsilon_{k-pM+1}\cdots\epsilon_M0^\infty=\epsilon(T_\beta^{k-pM}1,\beta)\in\Sigma_\beta,$$
and Proposition \ref{charFULL} (1) (3), we get $\epsilon_1^*\cdots\epsilon_k^*\epsilon_{k-pM+1}\cdots\epsilon_{M-1}\epsilon_M0^\infty\in\Sigma_\beta$,
i.e., $\epsilon_1^*\cdots\epsilon_{pM}^*\epsilon_1\cdots\epsilon_{M-1}\epsilon_M0^\infty$ $\in\Sigma_\beta$ which is false since $\pi_\beta(\epsilon_1^*\cdots\epsilon_{pM}^*\epsilon_1\cdots\epsilon_{M-1}\epsilon_M0^\infty)=1$.
\end{proof}

The following lemma is a convenient way to show that an admissible word is not full.

\begin{lemma}\label{tailnotfull}Any admissible word ends with a prefix of $\epsilon(1,\beta)$ is not full. That is, if there exists $1\le s\le n$ such that $w=w_1\cdots w_{n-s}\epsilon_1\cdots\epsilon_s\in\Sigma_\beta^n$, then $w$ is not full.
\end{lemma}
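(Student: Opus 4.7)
The plan is to reduce the claim to Proposition \ref{exaDIV}(1), which asserts that every admissible prefix of $\epsilon(1,\beta)$ fails to be full. The key observation is that the suffix $\epsilon_1\cdots\epsilon_s$ of $w$ is simultaneously an admissible prefix of $\epsilon(1,\beta)$ and, if $w$ were full, would inherit fullness from $w$ by Proposition \ref{fullzero}(2). So the strategy is: extract this suffix, confirm it is admissible, then derive a contradiction from the two conflicting fullness verdicts.

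First I would note that since $w\in\Sigma_\beta^n$, its tail $\epsilon_1\cdots\epsilon_s$ belongs to $\Sigma_\beta^s$: this is immediate from the definitions, since if $x\in[0,1)$ realizes $w$ then $T_\beta^{n-s}(x)$ realizes the tail $\epsilon_{i}(T_\beta^{n-s}x,\beta)=\epsilon_{n-s+i}(x,\beta)=\epsilon_i$ for $1\le i\le s$. In particular $\epsilon(1,\beta)|_s$ is admissible, so Proposition \ref{exaDIV}(1) applies and yields that $\epsilon_1\cdots\epsilon_s$ is not full.

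Next, argue by contradiction: assume $w$ is full, and split on the value of $s$. If $s=n$, then $w$ is itself $\epsilon(1,\beta)|_n$, which is admissible by hypothesis but not full by the previous paragraph, a contradiction. If $1\le s<n$, then $k:=n-s$ lies in $\{1,\dots,n-1\}$, and Proposition \ref{fullzero}(2) forces $\sigma^k(w)=\epsilon_1\cdots\epsilon_s$ to be full, again contradicting the previous paragraph. Either way $w$ cannot be full.

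No significant obstacle is anticipated, since the proof is a two-step application of results already established in this section. The only mild care needed is to cover both ranges of $s$: the edge case $s=n$ falls outside the hypothesis of Proposition \ref{fullzero}(2) (which requires $1\le k<n$), so it must be handled by applying Proposition \ref{exaDIV}(1) directly to $w$ itself.
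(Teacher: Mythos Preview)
Your proof is correct and follows exactly the same route as the paper, which simply cites Proposition~\ref{fullzero}(2) and Proposition~\ref{exaDIV}(1); you have faithfully unpacked how those two results combine, including the small edge case $s=n$ where Proposition~\ref{exaDIV}(1) applies directly to $w$.
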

\begin{proof} It follows from Proposition \ref{fullzero} (2) and Proposition \ref{exaDIV} (1).
\end{proof}

\begin{notation} Denote the first position where $w$ and $\epsilon(1,\beta)$ are different by
$$\mathfrak{m}(w) := \min\{k \ge 1 : w_k < \epsilon_k \} \quad\text{for } w\in \Sigma_\beta $$
and
$$\mathfrak{m}(w) := \mathfrak{m}(w0^\infty) \quad\text{for } w \in \Sigma^*_\beta.$$
\end{notation}

\begin{remark}\label{compare}
(1) Let $\epsilon(1,\beta)$ be finite with the length $M$. Then $\mathfrak{m}(w)\le M$ for any $w$ in $\Sigma_\beta$ or $\Sigma_\beta^*$.
\item(2) Let $w\in\Sigma_\beta^n$ and $\mathfrak{m}(w)\ge n$. Then $w=\epsilon_1 \cdots \epsilon_{n-1} w_n$ with $w_n \le \epsilon_n$.
\end{remark}
\begin{proof}(1) follows from $w\prec\epsilon(1,\beta)$.
\item(2) follows from $w_1=\epsilon_1, \cdots , w_{n-1}=\epsilon_{n-1}$ and $w \in \Sigma_\beta^n$.
\end{proof}

We give the complete characterizations of the structures of admissible words, full words and non-full words by the following two theorems and a corollary as basic results of this paper.

\begin{theorem}[The structure of admissible words]\label{strADM}
Let $w \in \Sigma_\beta^n$. Then $w=w_1w_2\cdots w_n$ can be uniquely decomposed to the form
\begin{align}\label{sumN1}\epsilon_1\cdots \epsilon_{k_1-1} w_{n_1}\epsilon_1\cdots \epsilon_{k_2-1} w_{n_2}\cdots\epsilon_1\cdots\epsilon_{k_p-1} w_{n_p} \epsilon_1 \cdots \epsilon_{l-1} w_n,\end{align}
where $p\ge0$, $k_1,\cdots,k_p,l\in \mathbb{N}$, $n=k_1+...+k_p+l$, $n_j=k_1+\cdots+k_j$, $w_{n_j}<\epsilon_{k_j}$ for all $1 \le j \le p$, $w_n \le \epsilon _l$ and the words $\epsilon_1\cdots \epsilon_{k_1-1} w_{n_1},\cdots,\epsilon_1\cdots\epsilon_{k_p-1} w_{n_p}$ are all full.
\newline\indent Moreover, if $\epsilon(1,\beta)$ is finite with length $M$, then $k_1,\cdots,k_p,l\le M$. For the case $l=M$, we must have $w_n< \epsilon_M$.
\end{theorem}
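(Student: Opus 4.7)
The plan is to prove existence and uniqueness of the decomposition by induction on the length $n$, iteratively peeling off blocks by locating the first index at which $w$ falls strictly below the greedy expansion $\epsilon(1,\beta)$. The main tools are Parry's criterion (Lemma \ref{charADM}), Lemma \ref{admfull} for fullness, and Lemma \ref{1cutADM} for admissibility of prefixes of $\epsilon^*(1,\beta)$.

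For existence, given admissible $w \in \Sigma_\beta^n$, I would let $k_1 \in \{1,\ldots,n\}$ be the smallest index with $w_{k_1} < \epsilon_{k_1}$ if such an index exists; by minimality one then has $w_1 \cdots w_{k_1-1} = \epsilon_1 \cdots \epsilon_{k_1-1}$. If $k_1 < n$, I would declare $\epsilon_1 \cdots \epsilon_{k_1-1} w_{k_1}$ as the first middle block, observe that the tail $w_{k_1+1}\cdots w_n$ is admissible (being a suffix of the admissible word $w$), and apply the induction hypothesis to it to produce the remaining $p-1$ middle blocks plus the final block. If $k_1 = n$, or no such index exists, then $w = \epsilon_1 \cdots \epsilon_{n-1} w_n$ with $w_n \le \epsilon_n$, giving the base case with $p=0$, $l=n$.

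For the fullness of each middle block $\epsilon_1 \cdots \epsilon_{k_j-1} w_{n_j}$ with $w_{n_j} < \epsilon_{k_j}$, I would invoke Lemma \ref{admfull} applied to $\epsilon^*(1,\beta)|_{k_j}$, which is admissible by Lemma \ref{1cutADM}. In the easy case where $\epsilon(1,\beta)$ is infinite or $k_j < M$, this prefix coincides with $\epsilon_1 \cdots \epsilon_{k_j}$, whose last digit $\epsilon_{k_j}$ is strictly positive (since $w_{n_j} < \epsilon_{k_j}$ with $w_{n_j} \ge 0$), so Lemma \ref{admfull} directly yields fullness. The delicate case is $k_j = M$ when $\epsilon(1,\beta)$ is finite of length $M$: here $\epsilon_1 \cdots \epsilon_M$ is not admissible, so one must instead use $\epsilon^*(1,\beta)|_M = \epsilon_1 \cdots \epsilon_{M-1}(\epsilon_M-1)$, which is itself full by Proposition \ref{exaDIV}(2) since $M \mid M$. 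For $w_{n_j} = \epsilon_M-1$ the block coincides with this full word; for $w_{n_j} < \epsilon_M-1$ Lemma \ref{admfull} applies to $\epsilon^*(1,\beta)|_M$ (whose last digit $\epsilon_M-1$ is nonzero, since otherwise $w_{n_j} < 0$ would be impossible).

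Uniqueness is forced by the construction, since $k_1$ must be the smallest index at which $w$ drops strictly below $\epsilon$, and the rest follows by induction. For the moreover clause, the bounds $k_j,l \le M$ follow from Remark \ref{compare}(1) applied to each shifted admissible subword $w_{n_{j-1}+1}\cdots w_n$ (so that $\mathfrak{m}(\cdot) \le M$), and the strict inequality $w_n < \epsilon_M$ when $l = M$ follows from Parry's criterion applied to $\sigma^{n_p} w = \epsilon_1\cdots\epsilon_{M-1} w_n$: the sequence $\epsilon_1\cdots\epsilon_{M-1} w_n 0^\infty$ must be $\prec \epsilon^*(1,\beta) = (\epsilon_1\cdots\epsilon_{M-1}(\epsilon_M-1))^\infty$, forcing $w_n \le \epsilon_M-1 < \epsilon_M$. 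The principal technical obstacle throughout is precisely the handling of the boundary index $k_j = M$ in the finite-$\epsilon$ case, since the natural candidate $\epsilon_1\cdots\epsilon_M$ is inadmissible and one must carefully route through $\epsilon^*(1,\beta)|_M$ instead.
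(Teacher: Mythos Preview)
Your proof is correct and follows essentially the same approach as the paper: iteratively locate the first index where $w$ falls strictly below $\epsilon(1,\beta)$ (the paper's $\mathfrak{m}$), peel off that block, and recurse on the tail, invoking Lemma~\ref{admfull} for fullness and Remark~\ref{compare}(1) for the bounds $k_j,l\le M$. You are in fact more careful than the paper about the fullness of middle blocks in the boundary case $k_j = M$ (routing through $\epsilon^*(1,\beta)|_M$ and Proposition~\ref{exaDIV}(2)); the paper glosses over this here and only addresses the analogous subtlety in the proof of Theorem~\ref{strFULL}.
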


\begin{theorem}[The structural criterion of full words]\label{strFULL}
Let $w \in \Sigma_\beta^n$ and $w_*:=\epsilon_1 \cdots \epsilon_{l-1} w_n$ be the suffix of $w$ as in Theorem \ref{strADM}. Then
$$w \text{ is full} \Longleftrightarrow w_* \text{ is full} \Longleftrightarrow w_n<\epsilon_{|w_*|}.$$
\end{theorem}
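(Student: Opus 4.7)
The plan is to prove the two equivalences in turn. The first, $w$ full $\Longleftrightarrow w_*$ full, reduces to the algebra of fullness in Proposition \ref{fullzero}. The second, $w_*$ full $\Longleftrightarrow w_n<\epsilon_{|w_*|}$, is the arithmetic heart of the statement and will be extracted from Lemma \ref{admfull} applied to the maximal admissible word $\epsilon^*(1,\beta)|_l$, where $l=|w_*|$.

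For the first equivalence: if $w$ is full, then $w_*$ is a (non-trivial) suffix of $w$ and hence full by Proposition \ref{fullzero}(2) (the degenerate case $p=0$, i.e.\ $w_*=w$, is immediate). Conversely, Theorem \ref{strADM} factorises $w$ as the concatenation of the full blocks $\epsilon_1\cdots\epsilon_{k_j-1}w_{n_j}$, $1\le j\le p$, followed by $w_*$; iterating Proposition \ref{fullzero}(1) makes the entire concatenation full.

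For the second equivalence, the forward direction is by contradiction: Theorem \ref{strADM} gives $w_n\le\epsilon_l$, and if $w_n=\epsilon_l$ then $w_*=\epsilon(1,\beta)|_l$ is an admissible truncation of $\epsilon(1,\beta)$ and thus not full by Proposition \ref{exaDIV}(1) (note that the potential exception $l=M$ in the finite case is already excluded by the last assertion of Theorem \ref{strADM}). For the converse, assume $w_n<\epsilon_l$. I plan to compare $w_n$ with $\epsilon_l^*$: in the infinite case and in the finite case with $l<M$ one has $\epsilon_l^*=\epsilon_l$, so $w_n<\epsilon_l^*$; in the finite case with $l=M$ one has $\epsilon_l^*=\epsilon_M-1$, so $w_n\le\epsilon_l^*$. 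When $w_n<\epsilon_l^*$, Lemma \ref{admfull} applied to $\epsilon^*(1,\beta)|_l$ (admissible by Lemma \ref{1cutADM}, with non-zero last digit $\epsilon_l^*>w_n\ge 0$) yields fullness of $\epsilon_1^*\cdots\epsilon_{l-1}^*w_n=\epsilon_1\cdots\epsilon_{l-1}w_n=w_*$. The remaining edge case is $w_n=\epsilon_l^*=\epsilon_M-1$, which can only occur when $\epsilon(1,\beta)$ is finite with length $M=l$; there $w_*=\epsilon^*(1,\beta)|_M$, which is full by Proposition \ref{exaDIV}(2) since $M\mid M$.

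The main obstacle I expect is the bookkeeping in the finite case $l=M$: the word $\epsilon_1\cdots\epsilon_M$ is no longer admissible, so one must switch from $\epsilon(1,\beta)$ to $\epsilon^*(1,\beta)$, and Proposition \ref{exaDIV}(2) is needed as a companion to Lemma \ref{admfull} to cover the boundary value $w_n=\epsilon_M-1$ that falls outside the strict inequality in that lemma.
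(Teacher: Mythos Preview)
Your proposal is correct and follows essentially the same route as the paper: Proposition~\ref{fullzero}(1)(2) for the first equivalence, Proposition~\ref{exaDIV}(1) for the forward direction of the second, and Lemma~\ref{admfull} together with Proposition~\ref{exaDIV}(2) for the converse. The only cosmetic difference is that you apply Lemma~\ref{admfull} uniformly to $\epsilon^*(1,\beta)|_l$ and isolate the boundary value $w_n=\epsilon_M-1$, whereas the paper applies it to $\epsilon(1,\beta)|_l$ when that word is admissible and switches to $\epsilon^*(1,\beta)|_M$ only in the $l=M$ case; both organizations cover the same ground.
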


\begin{corollary}\label{tail-non-full}Let $w\in\Sigma_\beta^n$. Then $w$ is not full if and only if it ends with a prefix of $\epsilon(1,\beta)$. That is, when $\epsilon(1,\beta)$ is infinite (finite with length $M$), there exists $1\le s\le n$ ( $1\le s\le \min\{M-1,n\}$ respectively) such that $w=w_1\cdots w_{n-s}\epsilon_1\cdots\epsilon_s$.
\end{corollary}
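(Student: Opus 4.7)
The plan is to derive both directions of the corollary directly from Theorem~\ref{strADM} and Theorem~\ref{strFULL}, which already do almost all of the work. The backward implication---if $w$ ends with $\epsilon_1\cdots\epsilon_s$ for some $1\le s\le n$, then $w$ is non-full---is exactly the content of Lemma~\ref{tailnotfull}, so I will simply cite it and concentrate on the forward direction.

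For the forward direction, I will begin by applying Theorem~\ref{strADM} to write $w$ in its unique normal form, with terminal block $w_*=\epsilon_1\cdots\epsilon_{l-1}w_n$ where $l\ge 1$ and $w_n\le\epsilon_l$. By Theorem~\ref{strFULL}, $w$ is full iff $w_n<\epsilon_l$. Thus, assuming $w$ is non-full forces $w_n=\epsilon_l$, so $w_*=\epsilon_1\cdots\epsilon_l$, a prefix of $\epsilon(1,\beta)$. Setting $s:=l$, we obtain the desired factorization $w=w_1\cdots w_{n-s}\epsilon_1\cdots\epsilon_s$.

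It remains to check the range of $s$. The inequality $s=l\le n$ is immediate from $n=k_1+\cdots+k_p+l$ in Theorem~\ref{strADM}. In the finite case, where $\epsilon(1,\beta)$ has length $M$, Theorem~\ref{strADM} gives $l\le M$; moreover, if $l=M$ then the same theorem imposes $w_n<\epsilon_M$, which by Theorem~\ref{strFULL} would make $w$ full, contradicting the hypothesis. Hence $l\le M-1$, so $s\le\min\{M-1,n\}$, as claimed.

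No serious obstacle is anticipated: the corollary is essentially a repackaging of the structural results, and the only subtlety is correctly reading off the bound on $s$ from the endpoint condition $l=M$ of Theorem~\ref{strADM}. If one wanted a self-contained argument avoiding explicit reference to Theorem~\ref{strFULL}, the non-fullness of $\epsilon_1\cdots\epsilon_l$ could also be extracted from Proposition~\ref{exaDIV}(1) together with Proposition~\ref{fullzero}(2), but citing Theorem~\ref{strFULL} keeps the proof a one-liner.
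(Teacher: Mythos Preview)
Your proposal is correct and follows exactly the paper's own approach: the paper's proof simply states that the forward direction follows from Theorem~\ref{strADM} and Theorem~\ref{strFULL} and the backward direction from Lemma~\ref{tailnotfull}, and you have merely spelled out the (correct) details behind those citations, including the endpoint argument ruling out $l=M$ in the finite case.
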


\begin{proof}
$\boxed{\Rightarrow}$ follows from Theorem \ref{strADM} and Theorem \ref{strFULL}.
$\boxed{\Leftarrow}$ follows from Lemma \ref{tailnotfull}.
\end{proof}

\begin{proof}[Proof of Theorem \ref{strADM}]Firstly, we show the decomposition by the cases that $\epsilon(1,\beta)$ is infinite or finite.
\newline Case 1. $\epsilon(1,\beta)$ is infinite.
\newline Compare $w$ and $\epsilon(1,\beta)$. If $\mathfrak{m}(w)\ge n$, then $w$ has the form (\ref{sumN1}) with $w=\epsilon_1 \cdots \epsilon_{n-1} w_n$ by Remark \ref{compare} (2). If $\mathfrak{m}(w) < n$, let $n_1=k_1=\mathfrak{m}(w)\ge 1$. Then $w|_{n_1}=\epsilon_1\cdots\epsilon_{k_1-1} w_{n_1}$ with $w_{n_1}<\epsilon_{k_1}$.
Continue to compare the tail of $w$ and $\epsilon(1,\beta)$. If $\mathfrak{m}(w_{n_1+1}\cdots w_n)\ge n-n_1$, then $w_{n_1+1}\cdots w_n=\epsilon_1 \cdots \epsilon_{n-n_1-1} w_n$ with $w_n \le \epsilon_{n-n_1}$ by Remark \ref{compare} (2) and $w$ has the form (\ref{sumN1}) with $w=\epsilon_1 \cdots\epsilon_{k_1-1}w_{n_1}\epsilon_1\cdots\epsilon_{n-n_1-1}w_n$. If $\mathfrak{m}(w_{n_1+1}\cdots w_n)< n-n_1$, let $k_2=\mathfrak{m}(w_{n_1+1}\cdots w_n)\ge 1$ and $n_2=n_1+k_2$. Then $w|_{n_2}=\epsilon_1\cdots\epsilon_{k_1-1} w_{n_1} \epsilon_1\cdots\epsilon_{k_2-1} w_{n_2}$ with $w_{n_2}<\epsilon_{k_2}$.
Continue to compare the tail of $w$ and $\epsilon(1,\beta)$ for finite times. Then we can get that $w$ must have the form (\ref{sumN1}).
\newline Case 2. $\epsilon(1,\beta)$ is finite with length $M$.
\newline By Remark \ref{compare}(1), we get $\mathfrak{m}(w)$,$\mathfrak{m}(w_{n_1+1}\cdots w_n)$, $\cdots$, $\mathfrak{m}(w_{n_j+1}\cdots w_n)$, $\cdots$, $\mathfrak{m}(w_{n_p+1}\cdots w_n) \le M $ in Case 1. That is, $k_1,k_2,\cdots,k_p,l \le M$ in (\ref{sumN1}). For the case $l=M$, combining $w_{n_p+1}=\epsilon_1, \cdots, w_{n-1}=\epsilon_{M-1}$ and $w_{n_p+1}\cdots w_n \prec \epsilon_1\cdots \epsilon_M$, we get $w_n<\epsilon_M$.
\newline Secondly, $\epsilon_1\cdots \epsilon_{k_1-1} w_{n_1},\cdots,\epsilon_1\cdots\epsilon_{k_p-1} w_{n_p}$ are obviously full by Lemma \ref{admfull}.
\end{proof}

\begin{proof}[Proof of Theorem \ref{strFULL}]
By Proposition \ref{fullzero} (1) (2), we know that $w$ is full $\Longleftrightarrow w_*$ is full. So it suffices to prove that $w_*$ is full $\Longleftrightarrow w_n<\epsilon_{|w_*|}$.
\newline$\boxed{\Rightarrow}$ By $w_* \in \Sigma_\beta^*$, we get $w_n \le \epsilon_l$. Suppose $w_n=\epsilon_l$, then $w_*=\epsilon_1 \cdots \epsilon_l$ is not full by Proposition \ref{exaDIV} (1), which contradicts our condition. Therefore $w_n<\epsilon_l$.
\newline$\boxed{\Leftarrow}$ Let $w_n<\epsilon_l$. We show that $w_*$ is full by the cases that $\epsilon(1,\beta)$ is infinite or finite.
\newline Case 1. When $\epsilon(1,\beta)$ is infinite. we know that $w_*$ is full by $\epsilon_1 \cdots \epsilon_{l-1} \epsilon_l \in \Sigma_\beta^*, w_n<\epsilon_l$ and Lemma \ref{admfull}.
\newline Case 2. When $\epsilon(1,\beta)$ is finite with length $M$, we know $l\le M$ by Theorem \ref{strADM}.
\newline If $l<M$, we get $\epsilon_1 \cdots \epsilon_{l-1} \epsilon_l\in \Sigma_\beta^*$. Then $w_*$ is full by $w_n<\epsilon_l$ and Lemma \ref{admfull}.
\newline If $l=M$, we know that $\epsilon_1 \cdots \epsilon_{l-1} (\epsilon_l-1) = \epsilon_1 \cdots \epsilon_{M-1} (\epsilon_M-1) = \epsilon_1^* \cdots \epsilon_M^*$ is full by Proposition \ref{exaDIV} (2). Then $w_*$ is full by $w_n \le \epsilon_l-1$ and Lemma \ref{admfull}.
\end{proof}

From Theorem \ref{strADM}, Theorem \ref{strFULL} and Corollary \ref{tail-non-full} above, we can understand the structures of admissible words, full words and non-full words clearly, and judge whether an admissible word is full or not conveniently. They will be used for many times in the following sections.

\section{The lengths of the runs of full words}

\begin{definition}\label{nonzerosequence}Let $\beta>1$. Define $\{n_i(\beta)\}$ to be those positions of $\epsilon(1,\beta)$ that are nonzero. That is,
$$n_1(\beta):=\min\{k \ge 1: \epsilon_k \neq 0\} \text{ and } n_{i+1}(\beta):=\min\{k>n_i:\epsilon_k \neq 0\}$$
if there exists $k>n_i$ such that $\epsilon_k\neq0$ for $i\ge1$. We call $\{n_i(\beta)\}$ the \textit{nonzero sequence} of $\beta$, also denote it by $\{n_i\}$ if there is no confusion.
\end{definition}

\begin{remark}Let $\beta>1$, $\{n_i\}$ be the nonzero sequence of $\beta$. Then the followings are obviously true.
\begin{itemize}
\item[(1)] $n_1=1$;
\item[(2)] $\epsilon(1,\beta)$ is finite if and only if $\{n_i\}$ is finite;
\item[(3)] $\epsilon(1,\beta)=\epsilon_{n_1} 0 \cdots 0 \epsilon_{n_2} 0 \cdots 0 \epsilon_{n_3} 0 \cdots$.
\end{itemize}
\end{remark}

\begin{definition}\label{maximal}(1) Denote by $[w^{(1)},\cdots,w^{(l)}]$ the $l$ consecutive words from small to large in $\Sigma_\beta^n$ with lexicographic order, which is called \textit{a run of words} and $l$  is the \textit{length} of the run of words. If $w^{(1)},\cdots,w^{(l)}$ are all full, we call $[w^{(1)},\cdots,w^{(l)}]$ \textit{a run of full words}.
\newline(2) A run of full words $[w^{(1)},\cdots,w^{(l)}]$ is said to be \textit{maximal}, if it can not be elongated, i.e., `` the previous word of $w^{(1)}$ in $\Sigma_\beta^n$ is not full or $w^{(1)}=0^n$ '' and `` the next word of $w^{(l)}$ is not full or $w^{(l)}=\epsilon^*(1,\beta)|_n$ ''.
\newline In a similar way, we can define \textit{a run of non-full words} and \textit{a maximal run of non-full words}.
\end{definition}

\begin{definition}\label{set}
We use $\mathcal{F}_\beta^n$ to denote the set of all the maximal runs of full words in $\Sigma_\beta^n$ and $F_\beta^n$ to denote the length set of $\mathcal{F}_\beta^n$, i.e.,
$$F_\beta^n:=\{l\in\N: \text{ there exists } [w^{(1)},\cdots,w^{(l)}]\in\mathcal{F}_\beta^n \}.$$
Similarly, we use $\mathcal{N}_\beta^n$ to denote the set of all the maximal runs of non-full words and $N_\beta^n$ to denote the length set of $\mathcal{N}_\beta^n$.
\newline In $\mathcal{F}_\beta^n\cup\mathcal{N}_\beta^n$, we use $S_{max}^n$ to denote the maximal run with $\epsilon^*(1,\beta)|_n$ as its last element.
\end{definition}

\begin{remark}For any $w\in\Sigma_\beta^n$ with $w\neq 0^n$ and $w_n=0$, the previous word of $w$ in the lexicographic order in $\Sigma_\beta^n$ is $w_1\cdots w_{k-1}(w_k-1)\epsilon_1^*\cdots\epsilon_{n-k}^*$ where $k=\max\{1\le i\le n-1:w_i\neq0\}$.
\end{remark}

Notice that we will use the basic fact above for many times in the proofs of the following results in this paper.

\begin{theorem}[The lengths of the maximal runs of full words]\label{fulllength}Let $\beta>1$ with $\beta\notin\N$, $\{n_i\}$ be the nonzero sequence of $\beta$. Then
$$F_\beta^n=\left\{\begin{array}{ll}
\{\epsilon_{n_i}:n_i\le n\} & \mbox{if } \epsilon(1,\beta) \mbox{ is infinite or finite with length } M\ge n; \\
\{\epsilon_{n_i}\}\cup\{\epsilon_1+\epsilon_M\} & \mbox{if } \epsilon(1,\beta) \mbox{ is finite with length } M<n \mbox{ amd } M|n; \\
\{\epsilon_{n_i}:n_i\neq M\}\cup\{\epsilon_1+\epsilon_M\} & \mbox{if } \epsilon(1,\beta) \mbox{ is finite with length } M<n \mbox{ and } M\nmid n.
\end{array}\right.$$
\end{theorem}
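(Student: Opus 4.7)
Proof Plan:

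The plan is to classify each maximal run of full words by the decomposition (Theorem \ref{strADM}) of its elements, and to determine when a natural block of full words parametrized by a full decomposition prefix $v$ is bounded by non-full words on both sides, or is instead glued to an adjacent block by an admissibility-forced carry.

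First I work out two combinatorial tools. Using Lemma \ref{admfull}, the lex-predecessor in $\Sigma_\beta^n$ of $w\neq 0^n$ is
$$\mathrm{pred}(w)=w_1\cdots w_{k-1}(w_k-1)\epsilon_1^*\cdots\epsilon_{n-k}^*,\qquad k=\max\{i:w_i>0\}.$$
By Proposition \ref{fullzero} combined with Proposition \ref{exaDIV}, this predecessor is full iff $\epsilon_1^*\cdots\epsilon_{n-k}^*$ is full, which in the finite case is equivalent to $M\mid(n-k)$, and never holds in the infinite case. Dually, for a full word $w=v\cdot\epsilon_1\cdots\epsilon_{l-1}(\epsilon_l-1)$ with last-block length $l=n_i$, incrementing the last digit yields $v\cdot\epsilon_1\cdots\epsilon_l$, which is admissible (and non-full, ending in the prefix $\epsilon_1\cdots\epsilon_l$ of $\epsilon(1,\beta)$) precisely when $l<M$ or $\epsilon(1,\beta)$ is infinite; otherwise (i.e., $l=M$ in the finite case) this word is not admissible.

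Next, for each $n_i\le n$ and each full prefix $v$ of length $n-n_i$, the $\epsilon_{n_i}$ words $v\cdot\epsilon_1\cdots\epsilon_{n_i-1}\cdot t$ for $t=0,\ldots,\epsilon_{n_i}-1$ form an increasing run of lex-consecutive full words by Theorem \ref{strFULL}; call this the \emph{natural $n_i$-block at $v$}. By Theorem \ref{strADM}, every full word lies in exactly one such block. For $n_i\ge 2$, the rightmost positive digit of the first element of the block sits at position $n-n_i+n_{i-1}$, so $n-k=n_i-n_{i-1}\in\{1,\ldots,M-1\}$ in the finite case, and in particular $M\nmid(n-k)$; hence the predecessor is non-full and the block always begins a maximal run. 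When $l=n_i<M$ (or in the infinite case), the successor of the last element is also non-full, so the block ends a maximal run, giving a stand-alone run of length $\epsilon_{n_i}$ (Type~I).

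The critical case is $l=M$ in the finite case. Here the would-be successor $v\cdot\epsilon_1\cdots\epsilon_M$ fails to be admissible, so the actual successor of $v\cdot\epsilon^*(1,\beta)|_M$ (when $v$ admits a successor in $\Sigma_\beta^{n-M}$) is $v'\cdot 0^M$, where $v'=\mathrm{succ}(v)$, since $\epsilon^*(1,\beta)|_M$ is the lex-maximum admissible word of length $M$. Since $M\ge 2$ and the last $M$ digits of $v'\cdot 0^M$ are zero, the decomposition of $v'\cdot 0^M$ has last-block length exactly $1$ (otherwise the leading $\epsilon_1>0$ of a longer last block would have to sit at a position holding $0$), so $v'\cdot 0^M$ is itself full and begins a natural $1$-block that is absorbed into the same maximal run, yielding a merged run of length $\epsilon_M+\epsilon_1$ (Type~II). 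This merging occurs for every $v$ except $v=\epsilon^*(1,\beta)|_{n-M}$, which is a valid full prefix exactly when $M\mid n$ (by Proposition \ref{exaDIV}); in that case $v\cdot\epsilon^*(1,\beta)|_M=\epsilon^*(1,\beta)|_n$ has no successor, and the $l=M$ block is stand-alone of length $\epsilon_M$ (Type~III).

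Finally, each claimed length is realized: $v=0^{n-n_i}$ gives a Type~I run of length $\epsilon_{n_i}$ for each $n_i\le n$ with $n_i\neq M$; $v=0^{n-M}$ (when $M<n$) gives a Type~II run of length $\epsilon_M+\epsilon_1$; and $v=\epsilon^*(1,\beta)|_{n-M}$ (when $M\mid n$ and $M<n$) gives a Type~III run of length $\epsilon_M$. Note also that when $M=n$ the ``stand-alone'' $M$-block with $v$ empty contributes $\epsilon_M=\epsilon_{n_{\text{last}}}$, already in $\{\epsilon_{n_i}:n_i\le n\}$. Collecting Types~I, II, III under the respective hypotheses on $\epsilon(1,\beta)$ and $n$ yields exactly the three formulas of the theorem. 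The main technical obstacle lies in the $l=M$ analysis: carefully tracing the carry after an $l=M$ run and verifying that $v'\cdot 0^M$ both has last-block length $1$ and extends the same maximal run, which is the sole mechanism producing the new length $\epsilon_M+\epsilon_1$.
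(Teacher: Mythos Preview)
Your argument is correct, but it proceeds along a genuinely different route from the paper's own proof. The paper splits off the top run $S_{\max}^n$ (Lemma~\ref{fulllength2}) and, for every other maximal run, starts from the non-full word $w=w_1\cdots w_a\epsilon_1\cdots\epsilon_s$ bounding it on the right (via Corollary~\ref{tail-non-full}), then steps backwards through $\Sigma_\beta^n$ one word at a time, doing a case analysis on $s$ (the cases $s=1$, $s\ge2$ with $\epsilon_s\ne0$, and $s\ge2$ with $\epsilon_s=0$). The exceptional length $\epsilon_1+\epsilon_M$ appears only inside the sub-case $s=1$ when, after the $\epsilon_1$ obvious full predecessors, the next predecessor happens to end in $\epsilon^*_1\cdots\epsilon^*_{n-k}$ with $M\mid(n-k)$, forcing a further descent of $\epsilon_M$ steps (Lemma~\ref{fulllength1}).

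Your approach is structural rather than algorithmic: you partition the full words of $\Sigma_\beta^n$ into ``natural $n_i$-blocks'' indexed by the last-block length in the decomposition of Theorem~\ref{strADM}, and then determine which adjacent blocks are glued. The exceptional length $\epsilon_M+\epsilon_1$ arises for you from the observation that the top of an $M$-block has no admissible digit-increment, so its successor is $\mathrm{succ}(v)\cdot 0^M$, which you show is the bottom of a $1$-block. What your approach buys is a cleaner conceptual picture (every maximal full run is either a single block or an $M$-block glued to a $1$-block), and the case analysis is organised by the structure theorem rather than by the suffix length $s$. What the paper's approach buys is that it avoids the need to verify carefully that the block partition is compatible with the greedy decomposition (your claim that $v\cdot\epsilon_1\cdots\epsilon_{n_i-1}t$ really has last block of length exactly $n_i$ when $v$ is full), and it makes the realisability of each length more transparent by exhibiting the right-boundary word directly. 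Both proofs hinge on the same predecessor formula and on Proposition~\ref{exaDIV}(2); they simply traverse the maximal run from opposite ends.
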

\begin{proof}It follows from Definition \ref{maximal}, Lemma \ref{fulllength1}, Lemma \ref{fulllength2} and the fact that $n_i\le M$ for any $i$ when $\epsilon(1,\beta)$ is finite with length $M$.
\end{proof}

\begin{remark}By Theorem \ref{fulllength}, when $1<\beta<2$, we have
$$F_\beta^n=\left\{\begin{array}{ll}
\{1\} & \mbox{if } \epsilon(1,\beta) \mbox{ is infinite or finite with length } M\ge n; \\
\{1,2\} & \mbox{if } \epsilon(1,\beta) \mbox{ is finite with length } M<n.
\end{array}\right.$$
\end{remark}

\begin{lemma}\label{fulllength1}Let $\beta>1$ with $\beta\notin\N$, $\{n_i\}$ be the nonzero sequence of $\beta$. Then the length set of $\mathcal{F}_\beta^n \backslash \{S_{max}^n\}$, i.e., $\{l\in\N: \text{ there exists } [w^{(1)},\cdots,w^{(l)}]\in\mathcal{F}_\beta^n \backslash \{S_{max}^n\}\}$ is
$$\left\{\begin{array}{ll}
\{\epsilon_{n_i}:n_i\le n\} & \mbox{if } \epsilon(1,\beta) \mbox{ is infinite or finite with length } M>n; \\
\{\epsilon_{n_i}:n_i\neq M\} & \mbox{if } \epsilon(1,\beta) \mbox{ is finite with length } M=n; \\
\{\epsilon_{n_i}:n_i\neq M\}\cup\{\epsilon_1+\epsilon_M\} & \mbox{if } \epsilon(1,\beta) \mbox{ is finite with length } M<n.
\end{array}\right.$$
\end{lemma}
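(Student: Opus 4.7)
The plan is to classify each maximal run $R = [w^{(1)},\dots,w^{(L)}] \in \mathcal{F}_\beta^n \setminus \{S_{max}^n\}$ by its non-full successor $v := w^{(L+1)}$, and to compute $L$ by walking backwards using the predecessor rule stated in the Remark preceding Theorem \ref{fulllength}. By Theorems \ref{strADM} and \ref{strFULL} the decomposition of $v$ has a unique last block $\epsilon_1\cdots\epsilon_{l^*-1}\epsilon_{l^*}$ with $v_n = \epsilon_{l^*}$; only the case $\epsilon_{l^*} \ne 0$ (equivalently $l^* = n_\iota$ for some $\iota$) contributes, because otherwise one checks directly from the predecessor rule that the immediate predecessor of $v$ is itself non-full, so $v$ could not be the successor of any run of full words.

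The counting proceeds in layers. Decrementing the last digit of $v$ from $\epsilon_{l^*}-1$ down to $0$ produces $\epsilon_{l^*}$ admissible full words (full by Theorem \ref{strFULL}, since the last-block length is preserved and the new last digit lies strictly below $\epsilon_{l^*}$), ending at $w' = v_1\cdots v_{n-1}0$. If $w' = 0^n$ the run stops with length $\epsilon_1$; otherwise the carry-back gives $u = v_1\cdots v_{k-1}(v_k-1)\epsilon_1^*\cdots\epsilon_{n-k}^*$, where $k := \max\{i \le n-1 : v_i \ne 0\}$. A key structural observation is that if $\iota \ge 2$ then $v_{n-l^*+n_{\iota-1}} = \epsilon_{n_{\iota-1}} \ne 0$ lies within the last block, which forces $n - k \le n_\iota - n_{\iota-1} \le M - 1 < M$; the alternative $n - k \ge M$ is therefore possible only when $l^* = 1$ and $\epsilon(1,\beta)$ is finite with $M < n$.

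Next I determine whether $u$ is full by inspecting $n - k \pmod M$, using Corollary \ref{tail-non-full} together with the identities $\epsilon_j^* = \epsilon_j$ for $j \not\equiv 0 \pmod M$ and $\epsilon_j^* = \epsilon_M - 1$ for $j \equiv 0 \pmod M$. When $\epsilon(1,\beta)$ is infinite, or $M \ge n$, or more generally $n - k < M$, the carry-back tail equals $\epsilon_1\cdots\epsilon_{n-k}$, so $u$ is non-full and the run length is $\epsilon_{n_\iota}$. When $n - k$ is a positive multiple of $M$ (hence $l^* = 1$ and $M < n$), $u$ ends with $\epsilon_1\cdots\epsilon_{M-1}(\epsilon_M-1)$ and I claim $u$ is full: if $u$ ended with $\epsilon_1\cdots\epsilon_s$ for some $1 \le s \le M-1$, then $\epsilon_{M-s+i}=\epsilon_i$ for $i<s$ and $\epsilon_M-1=\epsilon_s$; but Lemma \ref{1exp} ($\sigma^{M-s}\epsilon \prec \epsilon$) then forces $\epsilon_M \le \epsilon_s$, contradicting $\epsilon_M-1=\epsilon_s$. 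A subsequent layer of $\epsilon_M$ decrements preserves fullness (the last-block length remains $M$), and the ensuing carry-back has $n - k' < M$ and so produces a non-full predecessor, yielding total length $\epsilon_1 + \epsilon_M$. When $n - k = qM + r$ with $q \ge 1$ and $0 < r < M$, the tail ends with $\epsilon_1\cdots\epsilon_r$, $u$ is non-full, and the length is again $\epsilon_{n_\iota} = \epsilon_1$.

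For the existence of runs realising each claimed length I will exhibit explicit admissible $v$'s: $v = 0^{n-n_i}\epsilon_1\cdots\epsilon_{n_i}$ realises length $\epsilon_{n_i}$ for each $n_i \le n$ (with $n_i \ne M$ in the finite cases, by the restriction $l^* \le M-1$), and $v = 0^{n-M-1}\cdot 1\cdot 0^{M-1}\cdot \epsilon_1$ realises length $\epsilon_1 + \epsilon_M$ in the case $M < n$, with admissibility confirmed by checking every $\sigma^k v \prec \epsilon^*(1,\beta)$. The main technical obstacle I anticipate is the Parry-based fullness step for $n - k \equiv 0 \pmod M$: excluding $u$ ending with $\epsilon_1\cdots\epsilon_s$ for \emph{every} $s \in \{1,\dots,M-1\}$ simultaneously requires the inequality $\epsilon_M \le \epsilon_s$ extracted from the agreement of the first $s-1$ coordinates of $\sigma^{M-s}\epsilon$ with $\epsilon$, and parallel care is needed to ensure $S_{max}^n$ (which by Proposition \ref{exaDIV}(2) belongs to $\mathcal{F}_\beta^n$ precisely when $M \mid n$) is correctly excised from the length set.
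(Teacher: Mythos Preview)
Your proposal is correct and follows essentially the same approach as the paper: classify each maximal full run by its non-full successor $v$, decrement the last digit to obtain $\epsilon_{l^*}$ full words, then analyze the carry-back predecessor $u$ to either terminate the run or (in the finite case with $l^*=1$ and $M\mid(n-k)$) pick up $\epsilon_M$ additional full words before the next carry-back lands on a non-full word. The one minor variation is that where the paper decides fullness of $u$ by invoking Proposition~\ref{exaDIV}(2) and Proposition~\ref{fullzero}(2) on the tail $\epsilon_1^*\cdots\epsilon_{n-k}^*$, you instead argue directly via Corollary~\ref{tail-non-full} and the Parry inequality of Lemma~\ref{1exp}; both routes are sound, and your explicit witnesses for the existence direction are a welcome addition to the paper's somewhat terse ``every value of $l$ can be achieved''.
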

\begin{proof}Let $[w^{(l)},w^{(l-1)},\cdots,w^{(2)},w^{(1)}]\in\mathcal{F}_\beta^n \backslash \{S_{max}^n\}$ and $w$ which is not full be the next word of $w^{(1)}$. By Corollary \ref{tail-non-full}, there exist $1 \le s \le n$, $0 \le a \le n-1$ with $a+s=n$ ($s \le M-1$, when $\epsilon(1,\beta)$ is finite with length $M$), such that $w=w_1\cdots w_a\epsilon_1\cdots\epsilon_s$.
\item(1) If $s=1$, that is, $w=w_1\cdots w_{n-1}\epsilon_1$, then
$w^{(1)}=w_1\cdots w_{n-1}(\epsilon_1-1)$,
$w^{(2)}=w_1\cdots w_{n-1}(\epsilon_1-2)$,
$\cdots$,
$w^{(\epsilon_1)}=w_1\cdots w_{n-1}0$ are full by Lemma \ref{admfull}.
\newline \textcircled{\small{$1$}} If $n=1$ or $w_1\cdots w_{n-1}=0^{n-1}$, it is obvious that $l=\epsilon_1$.
\newline \textcircled{\small{$2$}} If $n\ge 2$ and $w_1\cdots w_{n-1}\neq 0^{n-1}$, there exists $1 \le k \le n-1$ such that $w_k\neq 0$ and $w_{k+1}=\cdots=w_{n-1}=0$. Then the previous word of $w^{(\epsilon_1)}$ is
$$w^{(\epsilon_1+1)}=w_1\cdots w_{k-1}(w_k-1)\epsilon_1^*\cdots\epsilon_{n-k}^*.$$
\begin{itemize}
\item[i)] If $\epsilon(1,\beta)$ is infinite or finite with length $M\ge n$, then $w^{(\epsilon_1+1)}=w_1\cdots w_{k-1}(w_k-1)\epsilon_1\cdots\epsilon_{n-k}$ is not full by Lemma \ref{tailnotfull}. Therefore $l=\epsilon_1$.
\item[ii)] If $\epsilon(1,\beta)$ is finite with length $M<n$, we divide this case into two parts according to $M\nmid n-k$ or $M|n-k$.
\newline\textcircled{\small{$a$}} If $M\nmid n-k$, then $\epsilon_1^*\cdots \epsilon_{n-k}^*$ is not full by Proposition \ref{exaDIV} (2) and $w^{(\epsilon_1+1)}$ is also not full by Proposition \ref{fullzero} (2). Therefore $l=\epsilon_1$.
\newline\textcircled{\small{$b$}} If $M|n-k$, then $\epsilon_1^*\cdots \epsilon_{n-k}^*$ is full by Proposition \ref{exaDIV} (2) and $w^{(\epsilon_1+1)}$ is also full by Lemma \ref{admfull} and Proposition \ref{fullzero} (1). Let $w_1'\cdots w_{n-M}':=w_1\cdots w_{k-1}(w_k-1)\epsilon_1^*\cdots\epsilon_{n-k-M}^*$. Then
$$\begin{array}{rl}
w^{(\epsilon_1+1)}&=w_1'\cdots w_{n-M}'\epsilon_1\cdots\epsilon_{M-1}(\epsilon_M-1).
\end{array}$$
The consecutive previous words
$$\begin{array}{rl}
w^{(\epsilon_1+2)}&=w_1'\cdots w_{n-M}'\epsilon_1\cdots\epsilon_{M-1}(\epsilon_M-2)\\
w^{(\epsilon_1+3)}&=w_1'\cdots w_{n-M}'\epsilon_1\cdots\epsilon_{M-1}(\epsilon_M-3)\\
&\cdots\\
w^{(\epsilon_1+\epsilon_M)}&=w_1'\cdots w_{n-M}'\epsilon_1\cdots\epsilon_{M-1}0
\end{array}$$
are all full by Lemma \ref{admfull}. Since $\epsilon_1\neq0$ and $M>1$, there exists $1\le t\le M-1$ such that $\epsilon_t\neq0$ and $\epsilon_{t+1}=\cdots=\epsilon_{M-1}=0$. Then, as the previous word of $w^{(\epsilon_1+\epsilon_M)}$,
$$w^{(\epsilon_1+\epsilon_M+1)}=w_1'\cdots w_{n-M}'\epsilon_1\cdots\epsilon_{t-1}(\epsilon_t-1)\epsilon_1\cdots\epsilon_{M-t}$$
is not full by Lemma \ref{tailnotfull}. Therefore $l=\epsilon_1+\epsilon_M$.
\end{itemize}
\item(2) If $2\le s\le n$, we divide this case into two parts according to $\epsilon_s=0$ or not.
\newline \textcircled{\small{$1$}} If $\epsilon_s=0$, there exists $1\le t\le s-1$ such that $\epsilon_t\neq0$ and $\epsilon_{t+1}=\cdots=\epsilon_s=0$ by $\epsilon_1\neq0$. Then $w=w_1\cdots w_a\epsilon_1\cdots\epsilon_t0^{s-t}$, and $w^{(1)}=w_1\cdots w_a\epsilon_1\cdots\epsilon_{t-1}(\epsilon_t-1)\epsilon_1\cdots\epsilon_{s-t}$ is not full by Lemma \ref{tailnotfull}, which contradicts our assumption.
\newline \textcircled{\small{$2$}} If $\epsilon_s\neq0$, then
$$\begin{array}{rl}
w^{(1)}&=w_1\cdots w_a\epsilon_1\cdots\epsilon_{s-1}(\epsilon_s-1)\\
w^{(2)}&=w_1\cdots w_a\epsilon_1\cdots\epsilon_{s-1}(\epsilon_s-2)\\
&\cdots\\
w^{(\epsilon_s)}&=w_1\cdots w_a\epsilon_1\cdots\epsilon_{s-1}0
\end{array}$$
are full by Lemma \ref{admfull}.
By nearly the same way of \textcircled{\small{$1$}}, we can prove that the previous word of $w^{(\epsilon_s)}$ is not full. Therefore $l=\epsilon_s$.
\begin{itemize}
\item[i)] If $\epsilon(1,\beta)$ is infinite or finite with length $M>n$, combining $2\le s\le n$ and $\epsilon_s\neq0$, we know that the set of all values of $l=\epsilon_s$ is $\{\epsilon_{n_i}:2\le n_i\le n\}$.
\item[ii)] If $\epsilon(1,\beta)$ finite with length $M\le n$, combining $2\le s\le M-1$ and $\epsilon_s\neq0$, we know that the set of all values of $l=\epsilon_s$ is $\{\epsilon_{n_i}:2\le n_i<M\}$.
\end{itemize}
By the discussion above, we can see that in every case, every value of $l$ can be achieved. Combining $n_i\le M$ for any $i$ when $\epsilon(1,\beta)$ is finite with length $M$, $\epsilon_{n_1}=\epsilon_1$ and all the cases discussed above, we get the conclusion of this lemma.
\end{proof}

\begin{lemma}\label{fulllength2}Let $\beta>1$ with $\beta\notin\N$. If $\epsilon(1,\beta)$ is finite with length $M$ and $M|n$, then $S_{max}^n\in\mathcal{F}_\beta^n$ and the length of $S_{max}^n$ is $\epsilon_M$. Otherwise, $S_{max}^n\in\mathcal{N}_\beta^n$.
\end{lemma}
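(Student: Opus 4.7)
The plan is to decide whether $S_{max}^n \in \mathcal{F}_\beta^n$ or $S_{max}^n \in \mathcal{N}_\beta^n$ by looking at the fullness of the last element of $S_{max}^n$, namely $\epsilon^*(1,\beta)|_n$ (the lexicographic maximum of $\Sigma_\beta^n$ by Lemma \ref{1cutADM}). Proposition \ref{exaDIV} (1) and (2) together give the dichotomy: $\epsilon^*(1,\beta)|_n$ is full if and only if $\epsilon(1,\beta)$ is finite with length $M$ and $M \mid n$. In every other case this word is not full, whence $S_{max}^n \in \mathcal{N}_\beta^n$, which settles the ``otherwise'' clause immediately.

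For the main case, where $\epsilon(1,\beta)$ is finite with length $M$ and $n = pM$, I would enumerate the admissible words of length $n$ in strictly decreasing lexicographic order as $w^{(1)}, w^{(2)}, \ldots$, starting from $w^{(1)} := \epsilon^*(1,\beta)|_n$, whose last digit is $\epsilon_M - 1$. Decrementing only the last digit yields $w^{(k)} = \epsilon_1^* \cdots \epsilon_{n-1}^*(\epsilon_M - k)$ for $k = 1, \ldots, \epsilon_M$. When $\epsilon_M \ge 2$, I would apply Lemma \ref{admfull} to $w^{(1)}$ (which is admissible with last digit $\epsilon_M - 1 \ge 1$) to conclude that $w^{(2)}, \ldots, w^{(\epsilon_M)}$ are all full, while $w^{(1)}$ itself is full by Proposition \ref{exaDIV}(2); the degenerate case $\epsilon_M = 1$ leaves only $w^{(1)}$, which is full by the same proposition. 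Thus the run of full words ending at $\epsilon^*(1,\beta)|_n$ contains at least $\epsilon_M$ elements.

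To see the length is exactly $\epsilon_M$, I would show that the word $w^{(\epsilon_M + 1)}$ immediately preceding $w^{(\epsilon_M)}$ is not full. Because $\beta \notin \N$ forces $M \ge 2$ and $\epsilon_1 \ne 0$, I define $t := \max\{1 \le i \le M-1 : \epsilon_i \ne 0\}$; then the final length-$M$ block of $w^{(\epsilon_M)} = \epsilon_1^*\cdots\epsilon_{n-1}^*\cdot 0$ reads $\epsilon_1 \cdots \epsilon_t 0 \cdots 0$, so the last nonzero position of $w^{(\epsilon_M)}$ is $k = n - M + t$. Applying the standard previous-word construction from the Remark stated just before this lemma, and noting $\epsilon_j^* = \epsilon_j$ for $1 \le j \le M - t < M$, I obtain
\[
w^{(\epsilon_M + 1)} = w^{(\epsilon_M)}_1 \cdots w^{(\epsilon_M)}_{n-M+t-1}(\epsilon_t - 1)\epsilon_1 \cdots \epsilon_{M-t},
\]
which ends with the prefix $\epsilon_1 \cdots \epsilon_{M-t}$ of $\epsilon(1,\beta)$ and is therefore not full by Lemma \ref{tailnotfull}. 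Hence the run length equals $\epsilon_M$.

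The main obstacle I anticipate is the boundary check that $w^{(\epsilon_M + 1)}$ actually exists, i.e.\ $w^{(\epsilon_M)} \ne 0^n$; this holds because the first digit of $w^{(\epsilon_M)}$ is $\epsilon_1 \ne 0$ regardless of whether $p = 1$ or $p \ge 2$. A secondary delicate subcase is $\epsilon_M = 1$, where Lemma \ref{admfull} cannot propagate fullness from $w^{(1)}$ (whose last digit is already $0$), but the predicted run length in this subcase is exactly $1$ so that $w^{(1)}$ alone suffices.
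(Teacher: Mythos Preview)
Your proposal is correct and follows essentially the same route as the paper's proof: both use Proposition~\ref{exaDIV}(2) to decide the fullness of $\epsilon^*(1,\beta)|_n$, then in the case $M\mid n$ list the $\epsilon_M$ consecutive full words via Lemma~\ref{admfull}, and finally show the predecessor of $w^{(\epsilon_M)}$ is non-full by locating the last nonzero digit and invoking Lemma~\ref{tailnotfull}. The only difference is cosmetic: the paper abbreviates the last step by pointing to ``nearly the same way in the proof of Lemma~\ref{fulllength1}~(2)\textcircled{\small{1}}'', whereas you spell out the computation of $w^{(\epsilon_M+1)}$ explicitly (and also make the edge cases $\epsilon_M=1$ and $w^{(\epsilon_M)}\neq 0^n$ explicit, which the paper leaves implicit).
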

\begin{proof}Let $w^{(1)}=\epsilon_1^*\cdots\epsilon_n^*$.
\newline If $\epsilon(1,\beta)$ is finite with length $M$ and $M|n$, then $w^{(1)}$ is full by Proposition \ref{exaDIV} (2). We get $S_{max}^n\in\mathcal{F}_\beta^n$. Let $p=n/m-1\ge0$. As the consecutive previous words of $w^{(1)}$, $w^{(2)}=(\epsilon_1\cdots\epsilon_{M-1}(\epsilon_M-1))^p\epsilon_1\cdots\epsilon_{M-1}(\epsilon_M-2)$,
$\cdots$,
$w^{(\epsilon_M)}=(\epsilon_1\cdots\epsilon_{M-1}(\epsilon_M-1))^p\epsilon_1\cdots\epsilon_{M-1}0$ are full by Lemma \ref{admfull}.
By nearly the same way in the proof of Lemma \ref{fulllength1} (2) \textcircled{\small{$1$}}, we know that the previous word of $w^{(\epsilon_M)}$ is not full. Therefore the number of $S_{max}^n$ is $\epsilon_M$.
\newline Otherwise, $w^{(1)}$ is not full by Proposition \ref{exaDIV} (2). We get $S_{max}^n\in\mathcal{N}_\beta^n$.
\end{proof}

\begin{remark}All the locations of all the lengths in Theorem \ref{fulllength} can be found in the proof of Lemma \ref{fulllength1} and Lemma \ref{fulllength2}.
\end{remark}

%\begin{remark}The maximal length of the runs of full words is equal to the maximal length of the maximal runs of full words.
%\end{remark}

\begin{corollary}[The maximal length of the runs of full words]\label{maxlengthfull}Let $\beta>1$ with $\beta\notin\N$. Then
$$\max F_\beta^n=\left\{\begin{array}{ll}
\lfloor\beta\rfloor+\epsilon_M & \mbox{if } \epsilon(1,\beta) \mbox{ is finite with length } M<n;\\
\lfloor\beta\rfloor & \mbox{if } \epsilon(1,\beta) \mbox{ is infinite or finite with length } M\ge n.
\end{array}\right.$$
\end{corollary}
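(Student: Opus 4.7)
The plan is to read off $\max F_\beta^n$ directly from the explicit description of $F_\beta^n$ given in Theorem \ref{fulllength}, reducing the corollary to a two-line maximization once the following elementary observation is in hand: for all $k \ge 1$ one has $\epsilon_k \le \lfloor\beta\rfloor$, with equality at $k=1$. The equality $\epsilon_1 = \lfloor\beta\rfloor$ is immediate from the definition, since $\epsilon_1(1,\beta) = \lfloor \beta \cdot T_\beta^0(1) \rfloor = \lfloor\beta\rfloor$; the inequality is just the inclusion $\epsilon_k \in \mathcal{A}_\beta = \{0,1,\ldots,\lfloor\beta\rfloor\}$, which holds because $\beta \notin \mathbb{N}$. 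In particular $\max_i \epsilon_{n_i} = \epsilon_{n_1} = \epsilon_1 = \lfloor\beta\rfloor$.

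When $\epsilon(1,\beta)$ is infinite or finite with length $M \ge n$, Theorem \ref{fulllength} gives $F_\beta^n = \{\epsilon_{n_i} : n_i \le n\}$. Since $n_1 = 1 \le n$, this set contains $\epsilon_1 = \lfloor\beta\rfloor$, and every element is bounded by $\lfloor\beta\rfloor$, so the maximum equals $\lfloor\beta\rfloor$, as claimed.

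In the remaining case, $\epsilon(1,\beta)$ is finite with length $M < n$. Here the hypothesis $\beta \notin \mathbb{N}$ forces $M \ge 2$, for otherwise $\epsilon(1,\beta) = \epsilon_1 0^\infty$ would give $\beta = \epsilon_1 \in \mathbb{N}$. Whether $M \mid n$ or $M \nmid n$, Theorem \ref{fulllength} places $\epsilon_1 + \epsilon_M$ in $F_\beta^n$, while every other element is of the form $\epsilon_{n_i} \le \lfloor\beta\rfloor$. Since $\epsilon_M \ne 0$ by definition of the length $M$, $\epsilon_1 + \epsilon_M = \lfloor\beta\rfloor + \epsilon_M$ strictly exceeds $\lfloor\beta\rfloor$, so the maximum is $\lfloor\beta\rfloor + \epsilon_M$. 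No step presents a serious obstacle: the corollary is essentially bookkeeping on top of Theorem \ref{fulllength}, and the only point worth flagging is the easy-to-overlook observation that $M \ge 2$ in the second case.
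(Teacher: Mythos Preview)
Your proof is correct and follows essentially the same approach as the paper: both derive the maximum directly from Theorem~\ref{fulllength} using the observation $\epsilon_{n_i}\le\epsilon_{n_1}=\epsilon_1=\lfloor\beta\rfloor$. Your write-up is more detailed (the paper's proof is a single line), and your remarks that $\epsilon_M\neq 0$ and $M\ge 2$ are accurate, though the latter is not actually needed for the maximization.
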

\begin{proof}It follows from $\epsilon_{n_i}\le\epsilon_{n_1}=\epsilon_1=\lfloor\beta\rfloor$ for any $i$ and Theorem \ref{fulllength}.
\end{proof}

\begin{corollary}[The minimal length of the maximal runs of full words]\label{minlengthfull}Let $\beta>1$ with $\beta\notin\N$, $\{n_i\}$ be the nonzero sequence of $\beta$. Then
$$\min F_\beta^n=\left\{\begin{array}{ll}
\min\limits_{n_i<M}\epsilon_{n_i }& \mbox{if } \epsilon(1,\beta) \mbox{ is finite with length } M<n \mbox{ and } M\nmid n;\\
\min\limits_{n_i\le n}\epsilon_{n_i} & \mbox{otherwise.}
\end{array}\right.$$
\end{corollary}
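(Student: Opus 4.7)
The proof will be a direct consequence of Theorem \ref{fulllength}. The plan is to take the set $F_\beta^n$ provided by that theorem in each of its three cases and compute the minimum, verifying that the additional element $\epsilon_1+\epsilon_M$ (which only appears when $\epsilon(1,\beta)$ is finite with length $M<n$) is never responsible for the minimum.

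First I would treat the case where $\epsilon(1,\beta)$ is infinite, or finite with length $M\ge n$. Here Theorem \ref{fulllength} gives $F_\beta^n=\{\epsilon_{n_i}:n_i\le n\}$, and there is nothing to prove beyond reading off the minimum as $\min_{n_i\le n}\epsilon_{n_i}$.

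Next I would handle the case $\epsilon(1,\beta)$ finite with length $M<n$ and $M\mid n$, where $F_\beta^n=\{\epsilon_{n_i}\}\cup\{\epsilon_1+\epsilon_M\}$. The observation to use is that since $\epsilon(1,\beta)$ is finite with length $M$, every $n_i$ satisfies $n_i\le M\le n$, so $\{\epsilon_{n_i}\}=\{\epsilon_{n_i}:n_i\le n\}$; and because $n_1=1$ lies in the index range, $\epsilon_1+\epsilon_M\ge \epsilon_1\ge \min_{n_i\le n}\epsilon_{n_i}$, so this extra element cannot beat the minimum already present. Hence $\min F_\beta^n=\min_{n_i\le n}\epsilon_{n_i}$, which matches the ``otherwise'' clause of the corollary.

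Finally, for the case $\epsilon(1,\beta)$ finite with length $M<n$ and $M\nmid n$, Theorem \ref{fulllength} gives $F_\beta^n=\{\epsilon_{n_i}:n_i\neq M\}\cup\{\epsilon_1+\epsilon_M\}$. Since every $n_i\le M$, the condition $n_i\neq M$ becomes $n_i<M$, so the first piece is $\{\epsilon_{n_i}:n_i<M\}$; and again $n_1=1<M$ forces $\epsilon_1+\epsilon_M\ge \epsilon_1\ge \min_{n_i<M}\epsilon_{n_i}$, so the added value $\epsilon_1+\epsilon_M$ again cannot yield the minimum. This gives $\min F_\beta^n=\min_{n_i<M}\epsilon_{n_i}$, completing the case distinction. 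There is no real obstacle here; the whole argument is a three-way bookkeeping exercise on top of Theorem \ref{fulllength}, and the only observation of substance is the trivial estimate $\epsilon_1+\epsilon_M\ge\epsilon_1$ together with the fact that $n_1=1$ always participates in the minimum on the right-hand side.
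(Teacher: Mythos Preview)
Your proposal is correct and is essentially the paper's own argument spelled out in detail: the paper simply states that the corollary follows from Theorem \ref{fulllength} together with the observation that $n_i\le M$ for all $i$ when $\epsilon(1,\beta)$ is finite with length $M$, and your case-by-case computation makes explicit exactly the bookkeeping behind that sentence.
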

\begin{proof}It follows from $n_i\le M$ for any $i$ when $\epsilon(1,\beta)$ is finite with length $M$ and Theorem \ref{fulllength}.
\end{proof}

\begin{remark}It follows from Theorem \ref{fulllength} that the lengths of maximal runs of full words rely on the nonzero terms in $\epsilon(1,\beta)$, i.e., $\{\epsilon_{n_i}\}$.
\end{remark}

\section{The lengths of runs of non-full words}

Let $\{n_i\}$ be the nonzero sequence of $\beta$. We will use a similar concept of numeration system and greedy algorithm in the sense of \cite[Section 3.1]{AlSh03} to define the function $\tau_\beta$ below. For any $s\in\N$, we can write $s=\sum_{i\ge1}a_i n_i$ greedily and uniquely where $a_i\in\N\cup\{0\}$ for any $i$ and then define $\tau_\beta(s)=\sum_{i\ge_1}a_i$. Equivalently, we have the following.

\begin{definition}[The function $\tau_\beta$]\label{tau}Let $\beta>1$, $\{n_i\}$ be the nonzero sequence of $\beta$ and $s \in \N$. Define $\tau_\beta(s)$ to be the number needed to add up to $s$ greedily  by $\{n_i\}$ with repetition. We define it precisely below.
\newline Let $n_{i_1}=\max\{n_i:n_i\le s\}$. (Notice $n_1=1$.)
\newline If $n_{i_1}=s$, define $\tau_\beta(s):=1$.
\newline If $n_{i_1}<s$, let $t_1=s-n_{i_1}$ and $n_{i_2}=\max\{n_i:n_i\le t_1\}$.
\newline \indent\indent\indent\indent\indent If $n_{i_2}=t_1$, define $\tau_\beta(s):=2$.
\newline \indent\indent\indent\indent\indent If $n_{i_2}<t_1$, let $t_2=t_1-n_{i_2}$ and $n_{i_3}=\max\{n_i:n_i\le t_2\}$.
\newline $\cdots$
\newline$\begin{array}{ll}
\mbox{Generally for } j\in\N. & \mbox{ If } n_{i_j}=t_{j-1}(t_0:=s), \mbox{ define } \tau_\beta(s):=j.\\
                              & \mbox{ If } n_{i_j}<t_{j-1}, \mbox{ let } t_j=t_{j-1}-n_{i_j} \mbox{ and } n_{i_{j+1}}=\max\{n_i:n_i\le t_j\}.
\end{array}$
\newline $\cdots$
\newline Noting that $n_1=1$, it is obvious that there exist $n_{i_1}\ge n_{i_2}\ge \cdots \ge n_{i_d}$ all in $\{n_i\}$ such that $s=n_{i_1}+n_{i_2}+\cdots+n_{i_d}$, i.e., $n_{i_d}=t_{d-1}$. Define $\tau_\beta(s):=d$.
\end{definition}

In the following we give an example to show how to calculate $\tau_\beta$.

\begin{example} Let $\beta>1$ such that $\epsilon(1,\beta)=302000010^\infty$ (such $\beta$ exists by Lemma \ref{1exp}). Then the nonzero sequence of $\beta$ is $\{1,3,8\}$. The way to add up to $7$ greedily with repetition is $7=3+3+1$. Therefore $\tau_\beta(7)=3$.
\end{example}

\begin{proposition}[Properties of $\tau_\beta$]\label{pro}Let $\beta>1$, $\{n_i\}$ be the nonzero sequence of $\beta$ and $n\in\N$. Then
\begin{itemize}
\item[\emph{(1)}] $\tau_\beta(n_i)=1$ for any $i$;
\item[\emph{(2)}] $\tau_\beta(s)=s$ for any $1\le s\le n_2-1$, and $\tau_\beta(s)\le s$ for any $s\in\N$;
\item[\emph{(3)}] $\{1,2,\cdots,k\}\subset\{\tau_\beta(s):1\le s\le n\}$ for any $k\in \{\tau_\beta(s):1\le s\le n\}$;
\item[\emph{(4)}] $\{\tau_\beta(s):1\le s\le n\}=\{1,2,\cdots,\max\limits_{1\le s\le n}\tau_\beta(s)\}$.
\end{itemize}
\end{proposition}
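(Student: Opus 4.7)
The plan is to handle the four properties essentially in the order stated, since (4) is a formal consequence of (3), and (1)--(2) are immediate unwindings of the definition of $\tau_\beta$.

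For (1), I would just apply the algorithm with $s=n_i$: the first choice is $n_{i_1}=\max\{n_j:n_j\le n_i\}$, and since $n_i$ is itself in the sequence, $n_{i_1}=n_i=s$, so the algorithm terminates at step one and returns $1$. For the first half of (2), when $1\le s\le n_2-1$ the only element of $\{n_j\}$ that is $\le s$ is $n_1=1$, so the algorithm forces $n_{i_j}=1$ at every step; it takes exactly $s$ steps to reduce $s$ to $0$, giving $\tau_\beta(s)=s$. For the bound $\tau_\beta(s)\le s$ at arbitrary $s$, notice that each step subtracts $n_{i_j}\ge n_1=1$ from the running remainder, so the algorithm terminates after at most $s$ steps.

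The main work is (3). Given $k\in\{\tau_\beta(s):1\le s\le n\}$, I would fix $s_0$ with $1\le s_0\le n$ and $\tau_\beta(s_0)=k$, and let $s_0=n_{i_1}+n_{i_2}+\cdots+n_{i_k}$ be the greedy decomposition produced by Definition \ref{tau}. For each $1\le j\le k$ I would set
\[
s_j := n_{i_1}+n_{i_2}+\cdots+n_{i_j},
\]
so $1\le s_j\le s_0\le n$, and then claim $\tau_\beta(s_j)=j$. The key observation is that the greedy decomposition of $s_j$ is exactly $n_{i_1}+\cdots+n_{i_j}$. To see this I would argue by induction on $j$: the greedy first choice for $s_j$ is $\max\{n_i:n_i\le s_j\}$; since $s_j\ge n_{i_1}$ this max is at least $n_{i_1}$, and since $s_j\le s_0$ and $n_{i_1}=\max\{n_i:n_i\le s_0\}$ any $n_i\le s_j$ satisfies $n_i\le s_0$ hence $n_i\le n_{i_1}$, giving equality. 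After subtracting $n_{i_1}$ we are left with the remainder $n_{i_2}+\cdots+n_{i_j}$, which by the same reasoning (or by induction applied to the tail of the original greedy run) is handled by picking successively $n_{i_2},\ldots,n_{i_j}$. This forces $\tau_\beta(s_j)=j$, and so every integer in $\{1,2,\ldots,k\}$ is attained by some $\tau_\beta(s)$ with $1\le s\le n$.

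Finally (4) is automatic: taking $k=\max_{1\le s\le n}\tau_\beta(s)$ in (3) gives $\{1,2,\ldots,k\}\subset\{\tau_\beta(s):1\le s\le n\}$, while the reverse inclusion holds trivially by definition of $k$. The only subtlety I anticipate is the inductive verification in (3) that the greedy choices for $s_j$ coincide with the initial greedy choices made for $s_0$; once this monotonicity of the greedy algorithm along prefixes of its output is written out carefully, the rest is bookkeeping.
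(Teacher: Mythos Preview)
Your treatment of (1), (2), and (4) matches the paper's exactly, and your argument for (3) is correct, but it takes the mirror image of the paper's route. Given $k=\tau_\beta(t_0)$ with greedy decomposition $t_0=n_{i_1}+\cdots+n_{i_k}$, the paper looks at the \emph{suffix remainders} $t_1=t_0-n_{i_1}$, $t_2=t_1-n_{i_2}$, \ldots; since the greedy algorithm on $t_j$ is literally the tail of the greedy algorithm on $t_0$ from step $j+1$ onward, $\tau_\beta(t_j)=k-j$ is immediate from the recursive definition with no further verification needed. You instead take the \emph{prefix sums} $s_j=n_{i_1}+\cdots+n_{i_j}$ and show $\tau_\beta(s_j)=j$; this is correct, but as you note it requires checking that the greedy algorithm applied to $s_j$ reproduces the first $j$ choices made for $s_0$, an argument that, while straightforward (bounding $n_{i_m}+\cdots+n_{i_j}$ between $n_{i_m}$ and the original remainder $t_{m-1}$), is genuinely extra work. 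The paper's suffix approach buys you this for free and is worth knowing as the more economical option.
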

\begin{proof}(1) and (2) follow from Definition \ref{tau} and $n_1=1$.
\begin{itemize}
\item[(3)] Let $k\in \{\tau_\beta(s):1\le s\le n\}$. If $k=1$, the conclusion is obviously true. If $k\ge 2$, let $2\le t_0\le n$ such that $k=\tau_\beta(t_0)$, $n_{i_1}=\max\{n_i:n_i\le t_0\}$ and $t_1=t_0-n_{i_1}$. Then $1\le t_1<t_0\le n$ and it is obvious that $k-1=\tau_\beta(t_1)\in\{\tau_\beta(s):1\le s\le n\}$ by Definition \ref{tau}. By the same way, we can get $k-2,k-3,\cdots,1\in\{\tau_\beta(s):1\le s\le n\}$. Therefore $\{1,2,\cdots,k\}\subset\{\tau_\beta(s):1\le s\le n\}$.
\item[(4)] The inclusion $\{\tau_\beta(s):1\le s\le n\}\subset\{1,2,\cdots,\max\limits_{1\le s\le n}\tau_\beta(s)\}$ is obvious and the reverse inclusion follows from $\max\limits_{1\le s \le n}\tau_\beta(s)\in\{\tau_\beta(s):1\le s\le n\}$ and (3).
\end{itemize}
\end{proof}

For $n\in\N$, we use $r_n(\beta)$ to denote the maximal length of the strings of 0's in $\epsilon^*_1\cdots\epsilon^*_n$ as in \cite{FWL16}, \cite{HTY16} and \cite{TYZ16}, i.e.,
$$r_n(\beta)=\max\{k\ge1:\epsilon^*_{i+1}=\cdots=\epsilon^*_{i+k}=0 \text{ for some } 0\le i\le n-k\}$$
with the convention that $\max\emptyset=0$.

The following relation between $\tau_\beta(s)$ and $r_s(\beta)$ will be used in the proof of Corollary \ref{notfulllengthupperbound}.

\begin{proposition}\label{runlength}Let $\beta>1$. If $\epsilon(1,\beta)$ is infinite, then $\tau_\beta(s)\le r_s(\beta)+1$ for any $s\ge1$. If $\epsilon(1,\beta)$ is finite with length $M$, then $\tau_\beta(s)\le r_s(\beta)+1$ is true for any $1\le s\le M$.
\end{proposition}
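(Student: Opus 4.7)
The plan is to split into two cases depending on whether $s$ is itself one of the nonzero positions $\{n_i\}$. If $s=n_i$, the greedy algorithm terminates in a single step, so $\tau_\beta(s)=1\le r_s(\beta)+1$ holds trivially.

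For the interesting case $s\notin\{n_i\}$, I would first locate the unique index $k$ with $n_k<s<n_{k+1}$. In the finite case with length $M$, the hypothesis $s\le M$ combined with $s\notin\{n_i\}$ forces $k<K$ (where $n_K=M$ is the last nonzero position), so $n_{k+1}$ exists and satisfies $n_{k+1}\le M$. By the greedy definition of $\tau_\beta$, the first step selects $n_{i_1}=n_k$, leaving remainder $t_1=s-n_k\ge 1$, so $\tau_\beta(s)=1+\tau_\beta(t_1)$.

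Next I would exhibit a run of $t_1$ consecutive zeros sitting inside $\epsilon_1^*\cdots\epsilon_s^*$. By the very definition of $\{n_i\}$, the positions $n_k+1,\,n_k+2,\,\ldots,\,n_{k+1}-1$ are all zero in $\epsilon(1,\beta)$. In the finite case these indices are at most $n_{k+1}-1\le M-1$, so the modification in $\epsilon^*$ (which affects only position $M$) leaves them unchanged, and they remain zero in $\epsilon^*(1,\beta)$. Since $s<n_{k+1}$, the subrange $n_k+1,\ldots,s$ is entirely contained in this zero block and has length $s-n_k=t_1$, giving $r_s(\beta)\ge t_1$.

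The proof closes by combining the previous step with Proposition \ref{pro}(2), which bounds $\tau_\beta(t_1)\le t_1$: one obtains
$$\tau_\beta(s) \;=\; 1+\tau_\beta(t_1) \;\le\; 1+t_1 \;\le\; 1+r_s(\beta).$$
No induction on $s$ is required; the identity $\tau_\beta(s)=1+\tau_\beta(t_1)$ and the bound $\tau_\beta(t_1)\le t_1$ do all the work. The only subtle point---hardly an obstacle---is verifying that passing from $\epsilon$ to $\epsilon^*$ in the finite case does not destroy the exhibited zero block, which is immediate from $s<n_{k+1}\le M$. This also explains why the finite case is restricted to $s\le M$: if $s$ were allowed to exceed $M$, then greedy subtraction by $n_K=M$ could be iterated, but the zero block produced by $n_k<s<n_{k+1}$ inside $\epsilon_1^*\cdots\epsilon_s^*$ would no longer be available (positions beyond $M$ belong to repeated copies of $\epsilon_1\cdots\epsilon_{M-1}(\epsilon_M-1)$, whose zero runs need not accommodate $t_1$).
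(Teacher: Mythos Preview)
Your proof is correct and follows essentially the same route as the paper: perform one greedy step to write $\tau_\beta(s)=1+\tau_\beta(s-n_{i_1})$, bound $\tau_\beta(s-n_{i_1})\le s-n_{i_1}$ via Proposition~\ref{pro}(2), and observe that the positions $n_{i_1}+1,\ldots,s$ form a zero block in $\epsilon^*_1\cdots\epsilon^*_s$, giving $s-n_{i_1}\le r_s(\beta)$. The only differences are cosmetic---you split off the case $s\in\{n_i\}$ explicitly (the paper absorbs it into ``$s-n_{i_1}=0$'') and you spell out why the passage from $\epsilon$ to $\epsilon^*$ preserves the zero block in the finite case, which the paper leaves implicit.
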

\begin{proof}Let $\{n_i\}$ be the nonzero sequence of $\beta$ and $n_{i_1}=\max\{n_i:n_i\le s\}$. No matter $\epsilon(1,\beta)$ is infinite with $s\ge1$ or finite with length $M\ge s\ge1$, we have
$$\tau_\beta(s)-1=\tau_\beta(s-n_{i_1})\le s-n_{i_1}\le r_s(\beta)$$
since $s-n_{i_1}=0$ or $\epsilon^*_{n_{i_1}+1}\epsilon^*_{n_{i_1}+2}\cdots\epsilon^*_s=\epsilon_{n_{i_1}+1}\epsilon_{n_{i_1}+2}\cdots\epsilon_s=0^{s-n_{i_1}}$.
\end{proof}

\begin{lemma}\label{tau-non-full}Let $n\in\N$, $\beta>1$ with $\beta\notin\N$ and $w\in\Sigma_\beta^n$ end with a prefix of $\epsilon(1,\beta)$, i.e., $w=w_1\cdots w_{n-s}\epsilon_1\cdots\epsilon_s$ where $1\le s\le n$. Then the previous consecutive $\tau_\beta(s)$ words starting from $w$ in $\Sigma_\beta^n$ are not full, but the previous $(\tau_\beta(s)+1)$-th word is full.
\end{lemma}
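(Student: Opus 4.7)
The plan is to induct on $d := \tau_\beta(s)$, using the greedy decomposition $s = n_{i_1} + n_{i_2} + \cdots + n_{i_d}$ from Definition \ref{tau}. The key observation I will exploit is that the lexicographic predecessor in $\Sigma_\beta^n$ of any admissible word ending with the prefix $\epsilon_1\cdots\epsilon_s$ of $\epsilon(1,\beta)$ again ends with a prefix of $\epsilon(1,\beta)$, but now of the strictly shorter length $t_1 := s - n_{i_1}$. This should let me invoke the induction hypothesis with parameter $d-1$, shaving off exactly one consecutive non-full predecessor per step of the induction.

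For the base case $d = 1$ one has $s = n_{i_1}$, so $\epsilon_s \neq 0$. The word $w$ itself is not full by Corollary \ref{tail-non-full}, while its immediate predecessor is $w_1 \cdots w_{n-1}(\epsilon_s - 1)$, which is full by Lemma \ref{admfull} applied to the admissible word $w$ with nonzero last digit. This gives exactly $\tau_\beta(s) = 1$ consecutive non-full word followed by a full one, as required.

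For the inductive step ($d \ge 2$), I would compute the predecessor $w^{(1)}$ of $w$ explicitly. Because $s \neq n_i$ for any $i$ in this case, $\epsilon_s = 0$, and the last nonzero digit of $w$ lies at position $k = n - s + n_{i_1}$ with $n_{i_1} = \max\{n_i : n_i \le s\}$; indeed the trailing digits $\epsilon_{n_{i_1}+1},\ldots,\epsilon_s$ all vanish by the maximality of $n_{i_1}$. Invoking the remark preceding the theorem then yields
\[
w^{(1)} = w_1 \cdots w_{n-s}\,\epsilon_1 \cdots \epsilon_{n_{i_1}-1}(\epsilon_{n_{i_1}} - 1)\,\epsilon^*_1 \cdots \epsilon^*_{t_1}.
\]
When $\epsilon(1,\beta)$ is finite with length $M$, Corollary \ref{tail-non-full} forces $s \le M - 1$, so $t_1 < M$ and $\epsilon^*_1 \cdots \epsilon^*_{t_1} = \epsilon_1 \cdots \epsilon_{t_1}$; the identity is immediate in the infinite case. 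Either way, $w^{(1)}$ ends with the prefix $\epsilon_1 \cdots \epsilon_{t_1}$ of $\epsilon(1,\beta)$, so $w^{(1)}$ is non-full by Corollary \ref{tail-non-full}, and $\tau_\beta(t_1) = d - 1$ by the greedy algorithm in Definition \ref{tau}. Applying the induction hypothesis to $w^{(1)}$ with parameter $t_1$ then yields $d - 1$ further consecutive non-full predecessors followed by a full one, and prepending $w$ closes the induction.

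The main obstacle I anticipate is keeping straight the distinction between $\epsilon^*$ and $\epsilon$ when $\epsilon(1,\beta)$ is finite, since the predecessor formula involves $\epsilon^*$. This is resolved cleanly by Corollary \ref{tail-non-full}'s bound $s \le M - 1$, which propagates along the induction because the sequence $t_0 = s > t_1 > \cdots > t_d = 0$ is strictly decreasing. Once that is handled, the combinatorial backbone is just the greedy decomposition of $s$ matched step-by-step with the lexicographic predecessor operation on $\Sigma_\beta^n$.
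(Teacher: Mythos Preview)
Your proof is correct and follows essentially the same approach as the paper's: both unwind the greedy decomposition $s = n_{i_1} + \cdots + n_{i_d}$ by repeatedly computing lexicographic predecessors, with the paper presenting this as a direct iteration over $j=1,\ldots,d$ and you as an induction on $d = \tau_\beta(s)$. Your handling of the $\epsilon^*$ versus $\epsilon$ distinction is in fact more explicit than the paper's, which simply writes the predecessor's tail as $\epsilon_1\cdots\epsilon_{t_j}$ without pausing to note why $\epsilon_k^* = \epsilon_k$ for $k \le t_j$; the bound $s \le M-1$ you use there is really forced by admissibility of $w$ (a suffix $\epsilon_1\cdots\epsilon_M$ would violate Parry's criterion) rather than by Corollary~\ref{tail-non-full} directly, but the conclusion stands.
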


\begin{remark}Notice that $w=w_1\cdots w_{n-s}\epsilon_1\cdots\epsilon_s$ does not imply that $w_1\cdots w_{n-s}$ is full. For example, when $\beta>1$ with $\epsilon(1,\beta)=1010010^\infty$, let $w=001010=w_1\cdots w_4\epsilon_1\epsilon_2$. But $w_1\cdots w_4=0010$ is not full by Lemma \ref{tailnotfull}.
\end{remark}

\begin{proof}[Proof of Lemma \ref{tau-non-full}]Let $\{n_i\}$ be the nonzero sequence of $\beta$ and
$$w^{(1)}:=w_1^{(1)}\cdots w_{a_1}^{(1)}\epsilon_1\cdots\epsilon_s:=w_1\cdots w_{n-s}\epsilon_1\cdots\epsilon_s=w,$$
where $a_1=n-s$. It is not full by Lemma \ref{tailnotfull}.
\newline$\cdots$
\newline Generally for any $j\ge1$, suppose $w^{(j)},w^{(j-1)},\cdots,w^{(2)},w^{(1)}$ to be $j$ consecutive non-full words in $\Sigma_\beta^n$ where $w^{(j)}=w_1^{(j)}\cdots w_{a_j}^{(j)}\epsilon_1\cdots\epsilon_{t_{j-1}}$, $t_{j-1}>0$ $(t_0:=s)$. Let $w^{(j+1)}\in \Sigma_\beta^n$ be the previous word of $w^{(j)}$ and $n_{i_j}:=\max\{n_i:n_i\le t_{j-1}\}$.
\newline If $n_{i_j}=t_{j-1}$, then $\epsilon_{t_{j-1}}>0$ and $w^{(j+1)}=w_1^{(j)}\cdots w_{a_j}^{(j)}\epsilon_1\cdots\epsilon_{t_{j-1}-1}(\epsilon_{t_{j-1}}-1)$ is full by Lemma \ref{admfull}. We get the conclusion of this lemma since $\tau_\beta(s)=j$ at this time.
\newline If $n_{i_j}<t_{j-1}$, let $t_j=t_{j-1}-n_{i_j}$. Then $w^{(j)}=w_1^{(j)}\cdots w_{a_j}^{(j)}\epsilon_1\cdots\epsilon_{n_{i_j}}0^{t_j}$ and the previous word is
$$w^{(j+1)}=w_1^{(j)}\cdots w_{a_j}^{(j)}\epsilon_1\cdots\epsilon_{n_{i_j}-1}(\epsilon_{n_{i_j}}-1)\epsilon_1\cdots\epsilon_{t_j}=:w_1^{(j+1)}\cdots w_{a_{j+1}}^{(j+1)}\epsilon_1\cdots\epsilon_{t_j},$$
where $a_{j+1}=a_j+n_{i_j}$. By Lemma \ref{tailnotfull}, $w^{(j+1)}$ is also not full. At this time, $w^{(j+1)},w^{(j)},\cdots,w^{(2)},w^{(1)}$ are $j+1$ consecutive non-full words in $\Sigma_\beta^n$.
\newline$\cdots$
\newline Noting that $n_1=1$, it is obvious that there exist $d\in\N$ such that $w^{(d)},\cdots,w^{(1)}$ are not full, and $s=n_{i_1}+n_{i_2}+\cdots+n_{i_d}$, i.e., $n_{i_d}=t_{d-1}$. Then $\epsilon_{t_{d-1}}>0$ and $w^{(d+1)}=w_1^{(d)}\cdots w_{a_d}^{(d)}\epsilon_1\cdots\epsilon_{t_{d-1}-1}(\epsilon_{t_{d-1}}-1)$ is full by Lemma \ref{admfull}. We get the conclusion since $\tau_\beta(s)=d$.
\end{proof}

%\begin{remark}The maximal length of the runs of non-full words is equal to the maximal length of the maximal runs of non-full words.
%\end{remark}

\begin{corollary}[The maximal length of the runs of non-full words]\label{maxlengthnotfull}Let $\beta>1$ with $\beta\notin\N$. Then
$$\max N_\beta^n=\left\{\begin{array}{ll}
\max\{\tau_\beta(s):1\le s\le n\} & \mbox{if } \epsilon(1,\beta) \mbox{ is infinite;}\\
\max\{\tau_\beta(s):1\le s\le min\{M-1,n\}\} & \mbox{if } \epsilon(1,\beta) \mbox{ is finite with length } M.
\end{array}\right.$$
\end{corollary}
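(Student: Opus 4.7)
The plan is to combine Corollary \ref{tail-non-full} with Lemma \ref{tau-non-full}, which together pin down the length of the maximal non-full run ending at any given non-full word in terms of the function $\tau_\beta$. The formula for $\max N_\beta^n$ then follows by optimising $\tau_\beta(s)$ over all admissible prefix lengths $s$.

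For the upper bound, I would take an arbitrary maximal non-full run $[w^{(l)},\ldots,w^{(1)}]\in\mathcal{N}_\beta^n$ and focus on its rightmost element $w^{(1)}$. Being non-full, $w^{(1)}$ must end with a prefix $\epsilon_1\cdots\epsilon_s$ of $\epsilon(1,\beta)$ by Corollary \ref{tail-non-full}, where $1\le s\le n$ when $\epsilon(1,\beta)$ is infinite and $1\le s\le\min\{M-1,n\}$ when $\epsilon(1,\beta)$ is finite with length $M$. Applying Lemma \ref{tau-non-full} to $w^{(1)}$ with this $s$ gives that $w^{(1)}$ together with its $\tau_\beta(s)-1$ immediate predecessors are non-full while the $(\tau_\beta(s)+1)$-th predecessor is full. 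Since the run is maximal on the left, this forces $l=\tau_\beta(s)$, so $l$ does not exceed the claimed maximum.

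For the lower bound, I would realise the maximum by an explicit word. Pick $s^*$ attaining the maximum of $\tau_\beta$ over the admissible range and set $w:=0^{n-s^*}\epsilon_1\cdots\epsilon_{s^*}$. When $\epsilon(1,\beta)$ is infinite, or finite with $s^*\le M-1$, the truncation $\epsilon_1\cdots\epsilon_{s^*}$ coincides with $\epsilon^*(1,\beta)|_{s^*}\in\Sigma_\beta^{s^*}$ by Lemma \ref{1cutADM}, and prepending zeros preserves admissibility (it corresponds to iterated division by $\beta$), so $w\in\Sigma_\beta^n$. The word $w$ is non-full by Corollary \ref{tail-non-full}, and Lemma \ref{tau-non-full} produces $\tau_\beta(s^*)$ consecutive non-full words ending at $w$; these sit inside a single maximal non-full run of length at least $\tau_\beta(s^*)$.

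The subtlest piece of bookkeeping, and the main obstacle I anticipate, is the cutoff $s\le M-1$ in the finite case: the candidate $\epsilon_1\cdots\epsilon_M$ is not admissible because its extension by zeros agrees with $\epsilon^*(1,\beta)$ on the first $M-1$ digits and then exceeds it at position $M$, and this is exactly the constraint recorded by Corollary \ref{tail-non-full}. With the correct range of $s$ in place, both bounds coincide and the stated formula is obtained.
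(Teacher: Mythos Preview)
Your proposal is correct and follows essentially the same approach as the paper: both use Corollary \ref{tail-non-full} together with Lemma \ref{tau-non-full} to identify the length of any maximal non-full run as $\tau_\beta(s)$ for some admissible $s$, and both realise the maximum via the explicit word $0^{n-s^*}\epsilon_1\cdots\epsilon_{s^*}$. Your justification of admissibility for this word (via Lemma \ref{1cutADM} and prepending zeros) is slightly more explicit than the paper's, which simply asserts $0^{n-t_0}\epsilon_1\cdots\epsilon_{t_0}\in\Sigma_\beta^n$.
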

\begin{proof}Let $l\in N_\beta^n$ and $[w^{(l)},w^{(l-1)},\cdots,w^{(2)},w^{(1)}]\in\mathcal{N}_\beta^n$. Then, by Corollary \ref{tail-non-full}, there exists
$$\left\{\begin{array}{ll}
1\le s_0\le n & \mbox{if } \epsilon(1,\beta) \mbox{ is infinite}\\
1\le s_0\le \min\{M-1,n\} & \mbox{if } \epsilon(1,\beta) \mbox{ is finite with length } M
\end{array}\right.$$
such that $w^{(1)}=w_1^{(1)}\cdots w_{n-s_0}^{(1)}\epsilon_1\cdots\epsilon_{s_0}$ and we have $l=\tau_\beta(s_0)$ by Lemma \ref{tau-non-full}. Therefore
$$\max N_\beta^n\le\left\{\begin{array}{ll}
\max\{\tau_\beta(s):1\le s\le n\} & \mbox{if } \epsilon(1,\beta) \mbox{ is infinite}\\
\max\{\tau_\beta(s):1\le s\le min\{M-1,n\}\} & \mbox{if } \epsilon(1,\beta) \mbox{ is finite with length } M
\end{array}\right.$$
by the randomicity of the selection of $l$. On the other hand, the equality follows from the fact that $0^{n-t_0}\epsilon_1\cdots\epsilon_{t_0}\in\Sigma_\beta^n$ included, the previous consecutive $\tau_\beta(t_0)$ words are not full by Lemma \ref{tau-non-full} where
$$\tau_\beta(t_0)=\left\{\begin{array}{ll}
\max\{\tau_\beta(s):1\le s\le n\} & \mbox{if } \epsilon(1,\beta) \mbox{ is infinite;}\\
\max\{\tau_\beta(s):1\le s\le min\{M-1,n\}\} & \mbox{if } \epsilon(1,\beta) \mbox{ is finite with length } M.
\end{array}\right.$$
\end{proof}

In the following we give an example to show how to calculate the maximal length of the runs of non-full words in $\Sigma_\beta^n$.

\begin{example}Let $n=8$ and $\epsilon(1,\beta)=\epsilon_{n_1}0\epsilon_{n_2}000\epsilon_{n_3}0\cdots 0\epsilon_{n_4}0\cdots 0\epsilon_{n_5}0\cdots$,
where $n_1=1,n_2=3,n_3=7,n_4>8,\epsilon_{n_i}\neq 0$ for any $i$. Then, by Corollary \ref{maxlengthnotfull}, the maximal length of the runs of non-full words in $\Sigma_\beta^8$ is $\max\{\tau_\beta(s):1\le s\le 8\}$. Since \newline$\begin{array}{llllllll}
1=1 & \Rightarrow\tau_\beta(1)=1; && 2=1+1 & \Rightarrow\tau_\beta(2)=2; && 3=3 & \Rightarrow\tau_\beta(3)=1;\\
4=3+1 & \Rightarrow\tau_\beta(4)=2; && 5=3+1+1 & \Rightarrow\tau_\beta(5)=3; && 6=3+3 & \Rightarrow\tau_\beta(6)=2;\\
7=7 & \Rightarrow\tau_\beta(7)=1; && 8=7+1 & \Rightarrow\tau_\beta(8)=2,
\end{array}$
\newline we get that $\max\{\tau_\beta(s):1\le s\le 8\}=3$ is the maximal length.
\end{example}

\begin{corollary}\label{notfulllengthupperbound}Let $\beta>1$. We have $\max N_\beta^n\le r_n(\beta)+1$ for any $n\in\N$. Moreover, if $\epsilon(1,\beta)$ is finite with length $M$, then $\max N_\beta^n\le r_{M-1}(\beta)+1$ for any $n\in\N$.
\end{corollary}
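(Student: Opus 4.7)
The plan is to simply combine Corollary \ref{maxlengthnotfull} with Proposition \ref{runlength}, using the obvious monotonicity of $s \mapsto r_s(\beta)$. There is essentially no obstacle: the hard work was done in setting up $\tau_\beta$ and in proving Proposition \ref{runlength}. Note that when $\beta \in \N$ the statement is trivial (every admissible word is full, so $N_\beta^n = \emptyset$ or the bound is vacuous), so we assume $\beta \notin \N$.

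First I would treat the infinite case. Suppose $\epsilon(1,\beta)$ is infinite. Corollary \ref{maxlengthnotfull} gives
\[
\max N_\beta^n \;=\; \max_{1 \le s \le n} \tau_\beta(s),
\]
and Proposition \ref{runlength} yields $\tau_\beta(s) \le r_s(\beta) + 1$ for every $s \ge 1$. Since $r_s(\beta)$ is clearly non-decreasing in $s$ (a longer prefix of $\epsilon^*(1,\beta)$ can only contain longer or equally long runs of zeros), we obtain
\[
\max_{1 \le s \le n} \tau_\beta(s) \;\le\; \max_{1 \le s \le n}\bigl(r_s(\beta)+1\bigr) \;=\; r_n(\beta)+1,
\]
which gives the first inequality.

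Next I would handle the finite case. Suppose $\epsilon(1,\beta)$ is finite with length $M$. Corollary \ref{maxlengthnotfull} now gives
\[
\max N_\beta^n \;=\; \max_{1 \le s \le \min\{M-1,\,n\}} \tau_\beta(s).
\]
For every such $s$ we have $s \le M$, so Proposition \ref{runlength} applies and yields $\tau_\beta(s) \le r_s(\beta)+1$. By monotonicity of $r_{\,\cdot\,}(\beta)$,
\[
r_s(\beta) \;\le\; r_{\min\{M-1,\,n\}}(\beta) \;\le\; \min\bigl\{r_{M-1}(\beta),\, r_n(\beta)\bigr\},
\]
so $\max N_\beta^n \le r_n(\beta)+1$ and also $\max N_\beta^n \le r_{M-1}(\beta)+1$, which is precisely the ``moreover'' claim.

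The only step that needs a small sanity check is the monotonicity of $r_s(\beta)$, but this is immediate from the definition $r_n(\beta) = \max\{k \ge 1 : \epsilon^*_{i+1}=\cdots=\epsilon^*_{i+k}=0 \text{ for some } 0 \le i \le n-k\}$ with the convention $\max \emptyset = 0$: any run of zeros counted in $r_s(\beta)$ is also counted in $r_{s'}(\beta)$ for $s' \ge s$. So no new ideas are required, and the proof is a two-line deduction in each case.
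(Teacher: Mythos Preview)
Your proposal is correct and follows essentially the same approach as the paper: both proofs combine Corollary~\ref{maxlengthnotfull} with Proposition~\ref{runlength} and the monotonicity of $s\mapsto r_s(\beta)$. Your version is slightly more detailed (you explicitly dispose of the $\beta\in\N$ case, which the paper's proof tacitly ignores despite stating the corollary for all $\beta>1$), but the substance is identical.
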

\begin{proof}If $\epsilon(1,\beta)$ is infinite, then
$$\max N_\beta^n=\max\{\tau_\beta(s):1\le s\le n\}\le\max\{r_s(\beta)+1:1\le s\le n\}=r_n(\beta)+1.$$
If $\epsilon(1,\beta)$ is finite with length $M$, then
$$\max N_\beta^n=\max\{\tau_\beta(s):1\le s\le\min\{M-1,n\}\}\le\max\{r_s(\beta)+1:1\le s\le\min\{M-1,n\}\}.$$
and we have $\max N_\beta^n\le r_n(\beta)+1$ and $\max N_\beta^n\le r_{M-1}(\beta)+1$.
\end{proof}

\begin{remark}\label{n+1}Combining Corollary \ref{maxlengthnotfull} and $\tau_\beta(n)\le n$ (or Corollary \ref{notfulllengthupperbound} and $r_n(\beta)+1\le n$), we have $\max N_\beta^n\le n$ for any $n\in\N$ which contains the result about the distribution of full cylinders given by Bugeaud and Wang \cite[Theorem 1.2]{BuWa14}. Moreover, if $\epsilon(1,\beta)$ is finite with length $M$, then $\max N_\beta^n\le M-1$ for any $n\in\N$. If $\beta\in A_0$ which is a class of $\beta$ given by Li and Wu \cite{LiWu08}, then $\max N_\beta^n$ has the upper bound $\max\limits_{s\ge1}r_s(\beta)+1$ which does not rely on $n$.
\end{remark}

\begin{theorem}[The lengths of the maximal runs of non-full words]\label{notfulllength}Let $\beta>1$ with $\beta\notin\N$ and $\{n_i\}$ be the nonzero sequence of $\beta$. Then $N_\beta^n$ is given by the following table.
$$\begin{tabular}{|c|c|c|c|c|c|}
\hline
\multicolumn{4}{|c|}{Condition} & Conclusion & \multirow{2}{*}{Case} \\
\cline{1-5}
$\beta$ & \multicolumn{3}{|c|}{$\epsilon(1,\beta)$} & $N_\beta^n=$ & \\
\hline
\hline
\multirow{2}{*}{$\beta>2$} & \multicolumn{3}{|c|}{infinite} & $D_1$ & (1) \\
\cline{2-6}
& \multicolumn{3}{|c|}{finite with length $M$} & $D_2$ & (2) \\
\cline{1-6}
\multirow{8}{*}{$1<\beta<2$} & \multirow{2}{*}{infinite} & \multicolumn{2}{|c|}{$n<n_2$} & $\{n\}$ & (3) \\
\cline{3-6}
&& \multicolumn{2}{|c|}{$n\ge n_2$} & $D_5$ & (4) \\
\cline{2-6}
& \multirow{6}{*}{finite with length $M$} & \multirow{3}{*}{$n_2=M$} & $n<M$ & $\{n\}$ & (5) \\
\cline{4-6}
&&& $n=M$ & $\{M-1\}$ & (6) \\
\cline{4-6}
&&& $n>M$ & $D_4$ & (7) \\
\cline{3-6}
&& \multirow{3}{*}{$n_2<M$} & $n<n_2$ & $\{n\}$ & (8) \\
\cline{4-6}
&&& $n_2\le n < M$ & $D_5$ & (9) \\
\cline{4-6}
&&& $n\ge M$ & $D_3$ & (10) \\
\hline
\end{tabular}$$
$\begin{array}{rl}
\mbox{Here } & D_1=\{1,2,\cdots,\max\{\tau_\beta(s):1\le s\le n\}\}; \\
& D_2=\{1,2,\cdots,\max\{\tau_\beta(s):1\le s\le \min\{M-1,n\}\}\}; \\
& D_3=\{1,2,\cdots,\max\{\tau_\beta(s):1\le s\le M-1\}\}; \\
& D_4=\{1,2,\cdots,\min\{n-M,M-1\}\}\cup\{M-1\}; \\
& D_5=\{1,2,\cdots,\min\{n_2-1,n-n_2+1\}\}\cup\{\tau_\beta(s):n_2-1\le s\le n\}.
\end{array}$
\end{theorem}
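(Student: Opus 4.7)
The plan is to translate the problem into one about the function $\tau_\beta$ via Corollary~\ref{tail-non-full} and Lemma~\ref{tau-non-full}, and then assemble the answer using Proposition~\ref{pro}(4). Precisely, given a maximal non-full run $[w^{(l)},\ldots,w^{(1)}] \in \mathcal{N}_\beta^n$, Corollary~\ref{tail-non-full} provides a representation $w^{(1)} = w^{(1)}_1\cdots w^{(1)}_{n-s}\epsilon_1\cdots\epsilon_s$ with $1\le s\le n$ (and $s\le M-1$ if $\epsilon(1,\beta)$ is finite of length $M$), and Lemma~\ref{tau-non-full} identifies the length as $l = \tau_\beta(s)$. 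Hence $N_\beta^n = \{\tau_\beta(s) : s \in S(\beta,n)\}$, where $S(\beta,n)$ denotes the set of terminal indices $s$ realised by the last word of some maximal non-full run. The whole proof then becomes a case-by-case determination of $S(\beta,n)$, followed by one application of Proposition~\ref{pro}(4) per case.

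For $\beta>2$ (cases (1) and (2)), the canonical candidate $w = 0^{n-s}\epsilon_1\cdots\epsilon_s$ is admissible by Lemma~\ref{charADM}, and its lex-successor $0^{n-s-1}\,1\,0^s$ is full by Theorem~\ref{strFULL} since $\epsilon_1\ge 2$ forces its final block to end with $0 < \epsilon_1$. This shows $S(\beta,n)$ is as large as possible, namely $\{1,\ldots,n\}$ in case (1) and $\{1,\ldots,\min\{M-1,n\}\}$ in case (2); Proposition~\ref{pro}(4) then delivers $D_1$ and $D_2$.

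For $1<\beta<2$ the digit set is $\{0,1\}$ and the successor $0^{n-s-1}\,1\,0^s$ is itself non-full whenever its $1\,0^s$-tail matches the prefix $\epsilon_1\cdots\epsilon_{s+1}$ of $\epsilon(1,\beta)$, i.e.\ whenever $s+1\le n_2-1$. This is the source of the case split. When $n<n_2$ (cases (3), (5), (8)), Parry's criterion forces every admissible $n$-word to contain at most one $1$, so the non-full words form one contiguous block of length $n$ immediately above $0^n$, giving $N_\beta^n=\{n\}$. When $n=n_2=M$ (case (6)), a direct count shows $\Sigma_\beta^M$ consists of $0^M$, $10^{M-1}$ and the $M-1$ single-$1$ words strictly between them, so the unique maximal non-full run ends at $0\cdot 10^{M-2}$ and has length $\tau_\beta(M-1)=M-1$. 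In the remaining cases (4), (7), (9), (10), $S(\beta,n)$ splits into a high range $\{n_2-1,\ldots,n\}$ on which the $\beta>2$-style construction succeeds (the successor's $1\,0^s$-tail being too long to be a prefix of $\epsilon$), and a low range from which one builds candidates by repeating the block $\epsilon_1\cdots\epsilon_{n_2-1}0$ before the terminal $\epsilon_1\cdots\epsilon_s$; the low range contributes $\{1,\ldots,\min\{n_2-1,n-n_2+1\}\}$ thanks to $\tau_\beta(s)=s$ for $s<n_2$ (Proposition~\ref{pro}(2)), and the extra value $M-1$ in $D_4$ arises from the $S_{\max}^n$-run by exactly the mechanism seen in Lemma~\ref{fulllength1}. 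Combining the two pieces in each case then reproduces $D_3$, $D_4$, $D_5$.

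The main obstacle is the fine control of lex-successors in the $1<\beta<2$ regime: for each candidate word $w$ one must verify, through Theorem~\ref{strFULL} applied to the digit-by-digit ``carry'' produced by the lex-increment, that the word right after $w$ is actually full, or equivalently that no additional non-full words can be appended at the top of the candidate run. This is what separates $D_3$ from $D_4$ from $D_5$: the boundary terms (the isolated $M-1$ in $D_4$, and the $\min\{n_2-1,n-n_2+1\}$ cap in the low piece of $D_5$) encode precisely which candidate constructions fail because their successors drag the run further forward. Once $S(\beta,n)$ is pinned down, Proposition~\ref{pro}(4) converts the interval-of-$s$ data into the interval-of-$\tau_\beta$-values in each claimed set.
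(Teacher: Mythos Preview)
Your overall strategy matches the paper's exactly: identify $N_\beta^n=\{\tau_\beta(s):s\in S(\beta,n)\}$ via Corollary~\ref{tail-non-full} and Lemma~\ref{tau-non-full}, then pin down the set $S(\beta,n)$ of realised terminal indices case by case and invoke Proposition~\ref{pro}(4). Your treatment of cases (1)--(3), (5), (6), (8) is essentially the paper's.

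There is, however, a genuine error in your handling of case (7). You assert that the low range contributes $\{1,\ldots,\min\{n_2-1,n-n_2+1\}\}$ uniformly across (4), (7), (9), (10), and that the isolated value $M-1$ in $D_4$ arises from the run $S_{\max}^n$. Neither holds. Take $\epsilon(1,\beta)=10010^\infty$ (so $M=n_2=4$) and $n=5$: direct enumeration gives
\[
\Sigma_\beta^5=\{00000,\,00001,\,00010,\,00100,\,01000,\,10000,\,10001\}
\]
with exactly two maximal non-full runs, of lengths $3$ and $1$, so $N_\beta^5=\{1,3\}=D_4$, whereas your formula yields $\{1,2,3\}$. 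Here $S_{\max}^5=[10001]$ has length $1$, not $M-1=3$. In fact the value $M-1$ comes from the \emph{high} range, which in case (7) collapses to the single index $s=n_2-1=M-1$ (recall $s\le M-1$ in the finite case); and the correct low-range cap is $n-M$, not $n-M+1$, because when $n_2=M$ the block one must prepend to $\epsilon_1\cdots\epsilon_s$ in order to build a candidate is $\epsilon_1^*\cdots\epsilon_M^*=10^{M-1}$ of length $M$: the shorter block $10^{M-2}$ used in cases (4), (9), (10) would produce a word beginning $10^{M-2}1\cdots$, which is inadmissible since it is $\succ\epsilon^*(1,\beta)=(10^{M-1})^\infty$. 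This is precisely why the paper treats case (7) separately. The paper also supplies the reverse inclusion $N_\beta^n\subset D_4$ via the observation that if the last word $w^{(1)}=w_1\cdots w_a\epsilon_1\cdots\epsilon_s$ of a maximal non-full run has $w_1\cdots w_a\neq 0^a$, then admissibility (comparing $w_{k+1}\cdots w_a 10^\infty$ with $\epsilon(1,\beta)=10^{M-2}10^\infty$) forces $a\ge M$, hence $s\le n-M$; your final paragraph gestures at this step but does not actually carry it out.
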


\begin{corollary}[The minimal length of the maximal runs of non-full words]\label{minlengthnotfull}Let $\beta>1$ with $\beta\notin\N$ and $\{n_i\}$ be the nonzero sequence of $\beta$. Then
$$\min N_\beta^n=\left\{\begin{array}{ll}
M-1 & \mbox{if } 1<\beta<2 \mbox{ and } \epsilon(1,\beta) \mbox{ is finite with length } M=n_2=n; \\
n   & \mbox{if } 1<\beta<2 \mbox{ and } n<n_2; \\
1   & \mbox{otherwise.}
\end{array}\right.$$
\end{corollary}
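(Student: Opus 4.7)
The plan is to derive this corollary as a direct case analysis of the ten rows of Theorem \ref{notfulllength}, reading off the minimum of each set $\{n\}$, $\{M-1\}$, $D_1,\ldots,D_5$ and then grouping the answers into the three branches of the claim.

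First I would handle the easy cases that land in the ``otherwise'' branch (min equal to $1$). The sets $D_1$, $D_2$, and $D_3$ are by definition intervals of positive integers starting at $1$, so cases (1), (2), and (10) immediately give $\min N_\beta^n=1$. For case (7) the set $D_4$ contains $\{1,2,\ldots,\min\{n-M,M-1\}\}$; since this row assumes $n>M$ and $1<\beta<2$ forces $n_2\ge 2$, we have $M=n_2\ge 2$ and $n-M\ge 1$, so the interval is non-empty and again the minimum is $1$. For cases (4) and (9) the set $D_5$ contains $\{1,2,\ldots,\min\{n_2-1,n-n_2+1\}\}$; because $n_2\ge 2$ (as $n_2>n_1=1$) and the row hypothesis gives $n\ge n_2$, both quantities in the $\min$ are at least $1$, so $D_5$ contains $1$.

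Next I would read off the remaining rows, which produce the two non-trivial values. Cases (3), (5), and (8) all have $N_\beta^n=\{n\}$ and share the common hypothesis $1<\beta<2$ together with $n<n_2$ (whether $\epsilon(1,\beta)$ is infinite or finite), yielding $\min N_\beta^n=n$ exactly in the range of the second branch of the corollary. Case (6) gives $N_\beta^n=\{M-1\}$ with $1<\beta<2$, $\epsilon(1,\beta)$ finite, and $n_2=M=n$; this is precisely the first branch.

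Assembling these outputs covers all ten rows of Theorem \ref{notfulllength}, so the three branches of the corollary are exhaustive and mutually exclusive. The step that needs the most care is checking that $D_5$ (and $D_4$) truly contain $1$ in cases (4), (7), and (9); this is where I would explicitly invoke $n_2\ge 2$ from $n_1=1$ and the inequality $n\ge n_2$ (resp.\ $n>M$) coming from the row hypothesis, since otherwise the left union-piece of $D_5$ or $D_4$ could be empty and the minimum would have to be computed from $\{\tau_\beta(s):n_2-1\le s\le n\}$ or $\{M-1\}$ instead, potentially changing the answer.
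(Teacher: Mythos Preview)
Your proposal is correct and follows exactly the same approach as the paper, which simply says ``It follows from Theorem \ref{notfulllength}.'' You have merely spelled out the case-by-case reading of the ten rows that the paper leaves implicit, including the small checks that $D_4$ and $D_5$ contain $1$ under the relevant row hypotheses.
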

\begin{proof}It follows from Theorem \ref{notfulllength}.
\end{proof}

\begin{proof}[Proof of Theorem \ref{notfulllength}] We prove the conclusions for the cases (1)-(10) from simple ones to complicate as below.

Cases (3), (5) and (8) can be proved together. When $1<\beta<2$ and $n<n_2$, no matter $\epsilon(1,\beta)$ is finite or not, noting that $\lfloor\beta\rfloor=1$ and $\epsilon(1,\beta)|_{n_2}=10^{n_2-2}1$, we get $\epsilon_1\cdots\epsilon_n=10^{n-1}$. Then all the elements in $\Sigma_\beta^n$ from small to large are $0^n$, $0^{n-1}1$, $0^{n-2}10$, $\cdots$, $10^{n-1}$, where $0^n$ is full and the others are all not full by Lemma \ref{tailnotfull}. Therefore $N_\beta^n=\{n\}$.

Case (6). When $1<\beta<2$, $\epsilon(1,\beta)$ is finite with length $M$ and $n=n_2=M$, noting that $\lfloor\beta\rfloor=1$ and $\epsilon(1,\beta)=10^{M-2}10^\infty$, all the elements in $\Sigma_\beta^n$ from small to large are $0^M$, $0^{M-1}1$, $0^{M-2}10$, $\cdots$, $010^{M-2}$, $10^{M-1}$, where $0^M$ is full, $10^{M-1}$ is also full by Proposition \ref{exaDIV} (2) and the others are all not full by Lemma \ref{tailnotfull}. Therefore $N_\beta^n=\{M-1\}$.

Case (1). When $\beta>2$ and $\epsilon(1,\beta)$ is infinite, it suffices to prove $N_\beta^n\supset D_1$ since the reverse inclusion follows immediately from Corollary \ref{maxlengthnotfull}. By Proposition \ref{pro} (4), it suffices to show $N_\beta^n\supset\{\tau_\beta(s):1\le s\le n\}$. In fact:
\begin{itemize}
\item[\textcircled{\small{$1$}}] For any $1\le s\le n-1$, let $u=0^{n-s-1}10^s$. It is full by $\epsilon_1=\lfloor\beta\rfloor\ge 2$ and Corollary \ref{tail-non-full}. The previous word $u^{(1)}=0^{n-s}\epsilon_1\cdots\epsilon_s$ is not full by Lemma \ref{tailnotfull}. So $\tau_\beta(s)\in N_\beta^n$ by Lemma \ref{tau-non-full}.
\item[\textcircled{\small{$2$}}] For $s=n$, combining the fact that $\epsilon_1\cdots\epsilon_s$ is maximal in $\Sigma_\beta^n$ and Lemma \ref{tau-non-full}, we get $\tau_\beta(s)\in N_\beta^n$.
\end{itemize}
Therefore $N_\beta^n=D_1$.

Case (2) can be proved by similar way as Case (1).

Case (10). When $1<\beta<2$, $\epsilon(1,\beta)$ is finite with length $M$ and $n_2<M\le n$, we have $\epsilon(1,\beta)=10^{n_2-2}1\epsilon_{n_2+1}\cdots\epsilon_M0^\infty$. It suffices to prove $N_\beta^n\supset D_3$ since the reverse inclusion follows immediately from Corollary \ref{maxlengthnotfull}. By Proposition \ref{pro} (4), it suffices to show $N_\beta^n\supset\{\tau_\beta(s):1\le s\le M-1\}$. In fact:
\begin{itemize}
\item[\textcircled{\small{$1$}}] For any $n_2-1\le s\le M-1$, let $u=0^{n-s-1}10^s$. It is full by $s\ge n_2-1$ and Corollary \ref{tail-non-full}. The previous word $u^{(1)}=0^{n-s}\epsilon^*_1\cdots\epsilon^*_s=0^{n-s}\epsilon_1\cdots\epsilon_s$ is not full by Lemma \ref{tailnotfull}. So $\tau_\beta(s)\in N_\beta^n$ by Lemma \ref{tau-non-full}.
\item[\textcircled{\small{$2$}}] For any $1\le s\le n_2-2$, we get $n_2-1\le n_3-n_2$ by Lemma \ref{1exp}. So $1\le s\le n_2-2\le n_3-n_2-1\le M-n_2-1\le n-n_2-1$ and then $n-n_2-s\ge 1$. Let
$$u=0^{n-n_2-s}10^{n_2+s-1}.$$
It is full by $n_2+s-1\ge n_2-1$ and Corollary \ref{tail-non-full}. Noting that $n_2\le n_2+s-1<n_3$, the previous word of $u$ is
$$\begin{array}{rl}
u^{(1)}&=0^{n-n_2-s+1}\epsilon^*_1\cdots\epsilon^*_{n_2+s-1}\\
&=0^{n-n_2-s+1}\epsilon_1\cdots\epsilon_{n_2+s-1}\\
&=0^{n-n_2-s+1}10^{n_2-2}10^{s-1}\\
&=0^{n-n_2-s+1}10^{n_2-2}\epsilon_1\cdots\epsilon_s
\end{array}$$
which is not full by Lemma \ref{tailnotfull}. So $\tau_\beta(s)\in N_\beta^n$ by Lemma \ref{tau-non-full}.
\end{itemize}
Therefore $N_\beta^n=D_3$.

Case (7). When $1<\beta<2$, $\epsilon(1,\beta)$ is finite with length $M$ and $n>n_2=M$, we have $\epsilon(1,\beta)=10^{M-2}10^\infty$.
\newline On the one hand, we prove $N_\beta^n\subset D_4$. Let $l\in N_\beta^n$ and $[w^{(l)},w^{(l-1)},\cdots,w^{(2)},w^{(1)}]\in\mathcal{N}_\beta^n$. By Corollary \ref{tail-non-full}, there exist $1\le s\le M-1$,  $2\le n-M+1\le a\le n-1$ such that $a+s=n$ and $w^{(1)}=w_1\cdots w_a\epsilon_1\cdots\epsilon_s$.
Then $l=\tau_\beta(s)=s$ by Lemma \ref{tau-non-full} and $s\le n_2-1$. Moreover, $w^{(1)}=w_1\cdots w_a10^{s-1}$.
\begin{itemize}
\item[\textcircled{\small{$1$}}] If $w_1\cdots w_a=0^a$, then the next word of $w^{(1)}$ is $w:=0^{a-1}10^s$ which is full by $[w^{(l)}$, $w^{(l-1)}$, $\cdots$, $w^{(2)}$, $w^{(1)}]$ $\in\mathcal{N}_\beta^n$. Combining $s\le M-1$ and Corollary \ref{tail-non-full}, we get $s=M-1$. Hence $l=M-1\in D_4$.
\item[\textcircled{\small{$2$}}] If $w_1\cdots w_a\neq0^a$, we get $a\ge M$ by $w_{k+1}\cdots w_a10^\infty\prec\epsilon(1,\beta)=10^{M-2}10^\infty$ for any $k\ge0$. Hence $s\le n-M$ and $l=s\in D_4$.
\end{itemize}
On the other hand, we prove $N_\beta^n\supset D_4$.
\begin{itemize}
\item[\textcircled{\small{$1$}}] For $M-1$, let $u=0^{n-M}10^{M-1}$ which is full by Corollary \ref{tail-non-full}. The consecutive previous words are $u^{(1)}=0^{n-M+1}10^{M-2},\cdots,u^{(M-1)}=0^{n-1}1,u^{(M)}=0^n$ where $u^{(1)},\cdots,u^{(M-1)}$ are not full by Lemma \ref{tailnotfull}, and $u^{(M)}$ is full. Therefore $M-1\in N_\beta^n$.
\item[\textcircled{\small{$2$}}] For any $1\le s\le\min\{n-M,M-1\}$, let
$$u^{(1)}=0^{n-M-s}\epsilon^*_1\cdots\epsilon^*_{M+s}=0^{n-M-s}10^{M-1}10^{s-1}=0^{n-M-s}10^{M-1}\epsilon_1\cdots\epsilon_s.$$
i) If $s=n-M$, then $u^{(1)}=\epsilon_1^*\cdots\epsilon_{M+s}^*$ is maximal in $\Sigma_\beta^n$.
\newline ii) If $s<n-M$, i.e.,$n-M-s-1\ge0$, then the next word of $u^{(1)}$ is $0^{n-M-s-1}10^{M+s}$ which is full by Corollary \ref{tail-non-full}.
\newline Hence we must have $s=\tau_\beta(s)\in N_\beta^n$ by $s\le n_2-1$ and Lemma \ref{tau-non-full}.
\end{itemize}
Therefore $N_\beta^n=D_4$.

Cases (4) and (9) can be proved together. When $1<\beta<2$, $\epsilon(1,\beta)$ is infinite with $n\ge n_2$ or $\epsilon(1,\beta)$ is finite with length $M$ and $n_2\le n<M$, we have $\epsilon(1,\beta)=10^{n_2-2}1\epsilon_{n_2+1}\epsilon_{n_2+2}\cdots$. By Proposition \ref{pro} (2), we get
$$D_5=\{\tau_\beta(s):1\le s\le\min\{n_2-1,n-n_2+1\}\text{ or }n_2-1\le s\le n\}.$$
On the one hand, we prove $N_\beta^n\subset D_5$. Let $l\in N_\beta^n$ and $[w^{(l)},w^{(l-1)},\cdots,w^{(2)},w^{(1)}]\in\mathcal{N}_\beta^n$. By Corollary \ref{tail-non-full}, there exist $1\le s\le n$, $0\le a\le n-1$ such that $a+s=n$ and $w^{(1)}=w_1\cdots w_a\epsilon_1\cdots\epsilon_s$. Then $l=\tau_\beta(s)$ by Lemma \ref{tau-non-full}.
\begin{itemize}
\item[\textcircled{\small{$1$}}] If $a=0$, then $s=n$ and $l=\tau_\beta(n)\in D_5$.
\item[\textcircled{\small{$2$}}] If $a\ge 1$, we divide it into two cases.
\newline i) If $w_1\cdots w_a=0^a$, then the next word of $w^{(1)}$ is $0^{a-1}10^s$ which is full by $[w^{(l)}$, $w^{(l-1)}$, $\cdots$, $w^{(2)}$, $w^{(1)}]$ $\in\mathcal{N}_\beta^n$. Combining $\epsilon(1,\beta)=10^{n_2-2}1\epsilon_{n_2+1}\epsilon_{n_2+2}\cdots$ and Corollary \ref{tail-non-full}, we get $s\ge n_2-1$. Hence $l=\tau_\beta(s)\in D_5$.
\newline ii) If $w_1\cdots w_a\neq0^a$, we get $a\ge n_2-1$ by $w_{k+1}\cdots w_a10^\infty\prec\epsilon(1,\beta)=10^{n_2-2}1\epsilon_{n_2+1}\epsilon_{n_2+2}\cdots$ for any $k\ge0$. Hence $s\le n-n_2+1$.
\newline\textcircled{\small{$a$}} If $s\ge n_2-1$, then $l=\tau_\beta(s)\in\{\tau_\beta(s):n_2-1\le s\le n\}\subset D_5$.
\newline\textcircled{\small{$b$}} If $s\le n_2-1$, then $l=\tau_\beta(s)\in\{\tau_\beta(s):1\le s\le\min\{n_2-1,n-n_2+1\}\}\subset D_5$.
\end{itemize}
On the other hand, we prove $N_\beta^n\supset D_5$.
\begin{itemize}
\item[\textcircled{\small{$1$}}] For any $n_2-1\le s\le n$, let $u^{(1)}=0^{n-s}\epsilon^*_1\cdots\epsilon^*_s$. No matter whether $\epsilon(1,\beta)$ is infinite or finite with length $M>n$ (which implies $s<M$), we get $u^{(1)}=0^{n-s}\epsilon_1\cdots\epsilon_s$ which is not full by Lemma \ref{tailnotfull}.
\newline i) If $s=n$, then $u^{(1)}=\epsilon^*_1\cdots\epsilon^*_n$ is maximal in $\Sigma_\beta^n$.
\newline ii) If $n_2-1\le s\le n-1$, then the next word of $u^{(1)}$ is $0^{n-s-1}10^s$ which is full by $s\ge n_2-1$ and Corollary \ref{tail-non-full}.
\newline Hence we must have $\tau_\beta(s)\in N_\beta^n$ by Lemma \ref{tau-non-full}.
\item[\textcircled{\small{$2$}}] For any $1\le s\le \min\{n_2-1,n-n_2+1\}$, let
$$u^{(1)}=0^{n-n_2-s+1}\epsilon^*_1\cdots\epsilon^*_{n_2+s-1}.$$
No matter $\epsilon(1,\beta)$ is infinite or finite with length $M>n$ (which implies $n_2+s-1\le n<M$), we get
$$u^{(1)}=0^{n-n_2-s+1}\epsilon_1\cdots\epsilon_{n_2+s-1}.$$
Since Lemma \ref{1exp} implies $n_2-1\le n_3-n_2$, we get $1\le s\le n_2-1\le n_3-n_2$ and then $n_2\le n_2+s-1<n_3$. Hence
$$u^{(1)}=0^{n-n_2-s+1}10^{n_2-2}10^{s-1}$$
$$\indent\indent=0^{n-n_2-s+1}10^{n_2-2}\epsilon_1\cdots\epsilon_s$$
which is not full by Lemma \ref{tailnotfull}.
\newline i) If $s=n-n_2+1$, then $u^{(1)}=\epsilon^*_1\cdots\epsilon^*_n$ is maximal in $\Sigma_\beta^n$.
\newline ii) If $s<n-n_2+1$, i.e., $n-n_2-s\ge 0$, then the next word of $u^{(1)}$ is $0^{n-n_2-s}10^{n_2+s-1}$ which is full by Corollary \ref{tail-non-full}.
\newline Hence we must have $\tau_\beta(s)\in N_\beta^n$ by Lemma \ref{tau-non-full}.
\end{itemize}
Therefore $N_\beta^n=D_5$.
\end{proof}

\begin{remark}It follows from Theorem \ref{notfulllength} that the lengths of the maximal runs of non-full words rely on the positions of nonzero terms in $\epsilon(1,\beta)$, i.e., $\{n_i\}$.
\end{remark}

\begin{ack}
The work was supported by NSFC 11671151 and Guangdong Natural Science Foundation 2014A030313230.
\end{ack}


\begin{thebibliography}{0000000000}

\bibitem[AlSh03]{AlSh03} \textsc{J.-P. Allouche and J. Shallit}, \textit{ Automatic sequences}, Theory, applications, generalizations. Cambridge University Press, Cambridge, 2003.

\bibitem[Bla89]{Bla89} \textsc{F. Blanchard}, \textit{ $\beta$-expansions and symbolic dynamics}, Theoret. Comput. Sci. 65 (1989), no. 2, 131-141.

\bibitem[BaLi14]{BaLi14} \textsc{J.-C. Ban and B. Li}, \textit{ The multifractal spectra for the recurrence rates of beta-transformations}, J. Math. Anal. Appl. 420 (2014), no. 2, 1662-1679.

\bibitem[BuWa14]{BuWa14} \textsc{Y. Bugeaud and B.-W. Wang}, \textit{ Distribution of full cylinders and the Diophantine properties of the orbits in $\beta$-expansions}, J. Fractal Geom. 1 (2014), no. 2, 221-241.

\bibitem[DK02]{DK02} \textsc{K. Dajani and C. Kraaikamp}, \textit{ Ergodic theory of numbers}, Carus Mathematical Monographs, 29. Mathematical Association of America, Washington, DC, 2002.

\bibitem[FW12]{FW12} \textsc{A.-H. Fan and B.-W. Wang}, \textit{ On the lengths of basic intervals in beta expansions}, Nonlinearity 25 (2012), no. 5, 1329-1343.

\bibitem[FWL16]{FWL16} \textsc{L. Fang, M. Wu and B. Li}, \textit{ Approximation orders of real numbers by $\beta$-expansions} (arXiv:1603.08402v1).

\bibitem[HTY16]{HTY16} \textsc{H. Hu, X. Tong and Y.-L. Yu}, \textit{ On consecutive 0 digits in the $\beta$-expansion of 1}, J. Number Theory 166 (2016), 219-234.

\bibitem[LPWW14]{LPWW14} \textsc{B. Li, T. Persson, B. Wang and J. Wu}, \textit{ Diophantine approximation of the orbit of 1 in the dynamical system of beta expansions}, Math. Z. 276 (2014), no. 3-4, 799-827.

\bibitem[LiWu08]{LiWu08} \textsc{B. Li and J. Wu}, \textit{ Beta-expansion and continued fraction expansion}, J. Math. Anal. Appl. 339 (2008), no. 2, 1322-1331.

\bibitem[Pa60]{Pa60} \textsc{W. Parry}, \textit{ On the $\beta$-expansions of real numbers}, Acta Math. Acad. Sci. Hungar. 11 1960 401-416.

\bibitem[PoYu98]{PoYu98} \textsc{M. Pollicott and M. Yuri}, \textit{ Dynamical systems and ergodic theory}, London Mathematical Society Student Texts, 40. Cambridge University Press, Cambridge, 1998.

\bibitem[Ren57]{Ren57} \textsc{A. R\'enyi}, \textit{ Representations for real numbers and their ergodic properties}, Acta Math. Acad. Sci. Hungar 8 1957 477-493.

\bibitem[Schme97]{Schme97} \textsc{J. Schmeling}, \textit{ Symbolic dynamics for $\beta$-shifts and self-normal numbers}, Ergodic Theory Dynam. Systems 17 (1997), no. 3, 675-694.

\bibitem[TaWa11]{TaWa11} \textsc{B. Tan and B.-W. Wang}, \textit{ Quantitative recurrence properties for beta-dynamical system}, Adv. Math. 228 (2011), no. 4, 2071-2097.

\bibitem[TWWX13]{TWWX13} \textsc{B. Tan, B.-W. Wang, J. Wu and J. Xu}, \textit{ Localized Birkhoff average in beta dynamical systems}, Discrete Contin. Dyn. Syst. 33 (2013), no. 6, 2547-2564.

\bibitem[TYZ16]{TYZ16} \textsc{X. Tong, Y.-L. Yu and Y.-F. Zhao}, \textit{ On the maximal length of consecutive zero digits of $\beta$-expansions}, Int. J. Number Theory 12 (2016), no. 3, 625-633.

\bibitem[Wal78]{Wal78} \textsc{P. Walters}, \textit{ Equilibrium States for $\beta$-Transformation and Related Transformations}, Math. Z. 159 (1978), no. 1, 65-88.

\bibitem[ZWL17]{ZWL17} \textsc{L. Zheng, M. Wu and B. Li}, \textit{ The topological property of the irregular sets on the lengths
of basic intervals in beta-expansions}, J. Math. Anal. Appl. 449 (2017), no. 1, 127-137.

\end{thebibliography}
\end{document}